\theoremstyle{definition}
\newtheorem{definition}{Definition}
\theoremstyle{plain}
\newtheorem{theorem}[definition]{Theorem}
\newtheorem{lemma}[definition]{Lemma}
\newtheorem{corollary}[definition]{Corollary}
\theoremstyle{remark}
\newcommand{\E}{\mathrm{E}}
\newcommand{\Var}{\mathrm{Var}}
\newcommand{\eps}{\epsilon}
\newcommand{\Ht}{\mathcal{H}_3}
\title[]
{Random triangular groups at density $1/3$}
\author{Sylwia Antoniuk}
\address{Adam Mickiewicz University,
Faculty of Mathematics and Computer Science
ul.~Umultowska 87,
61-614 Pozna\'n, Poland}
\email{\tt antoniuk@amu.edu.pl}
\author{Tomasz \L{u}czak}
\address{Adam Mickiewicz University,
Faculty of Mathematics and Computer Science
ul.~Umultowska 87,
61-614 Pozna\'n, Poland}
\email{\tt tomasz@amu.edu.pl}
\author{Jacek \'{S}wi\c{a}tkowski}
\address{Instytut Matematyczny,
Uniwersytet Wroc{\l}awski,
pl. Grunwaldzki 2/4,
50-384 Wroc{\l}aw,
Poland}
\email{swiatkow@math.uni.wroc.pl}
\thanks{Tomasz {\L}uczak is partially 
supported by NCN grant 2012/06/A/ST1/00261, while 
Jacek \'Swi\c atkowski is partially
supported by the Polish Ministry of Science and Higher
Education (MNiSW), Grant N201 541738.}
\keywords {random group, Khazdan's property, spectral gap, random graph}
\subjclass[2010]{
Primary:  20P05; 
secondary:
20F05, 
05C80.
}
\date{August 27th, 2013}
\begin{document}

\begin{abstract}
Let $\Gamma(n,p)$ denote the binomial model of 
a random triangular group.
We show that there exist constants $c, C > 0$ such 
that if $p \le {c}/{n^{2}}$, 
then a.a.s. $\Gamma(n,p)$ 
is  free  and if $p \ge C{\log n}/{n^2}$ 
then a.a.s.  $\Gamma(n,p)$ 
has Kazhdan's property (T). 
Furthermore, we show that there exist constants 
$C',c' > 0$ such that 
if ${C'}/{n^2} \le p \le c'{\log n}/{n^2}$, 
then a.a.s. $\Gamma(n,p)$ is 
neither free nor has Kazhdan's property (T).
\end{abstract}

\maketitle

\section{Introduction}

Let $\langle S | R \rangle$ be a group presentation with 
a set of generators $S$ and a set of relations $R$. We consider the 
following binomial model $\Gamma(n,p)$ of a random group 
which is very similar to the model introduced by \.{Z}uk 
in \cite{Z2003} (see also comments in sections \ref{sec3} and 
\ref{sec4} below). Here $S$ consists of $n$ 
generators, while $R$ consists of relations taken independently 
at random with probability $p$ among all cyclically reduced 
words of length three, i.e. relations are of the form $abc$, 
where $a\neq b^{-1}$, $b\neq c^{-1}$ and $c\neq a^{-1}$. 
Presentations obtained in this way are called \textit{triangular presentations}, and groups induced by them 
are called \textit{triangular groups}.

In the paper we study the asymptotic behavior of the random triangular
group $\Gamma(n,p)$ as $n\rightarrow\infty$. For 
a given function $p=p(n)$ we say that  $\Gamma(n,p)$ has a
given property a.a.s. (asymptotically almost surely) if the 
probability of $\Gamma(n,p)$ having this property tends to 1 
as $n\rightarrow \infty$. 
From \.Zuk's result~\cite{Z2003} (see also \cite{KK2012}) it follows that for every $\eps>0$ 
a.a.s.\ $\Gamma(n,p)$ is free provided $p\le n^{-2-\eps}$, and 
it has a.a.s.\ Kazhdan's property (T) whenever $p\ge n^{-2+\eps}$.
This phenomenon is known as 
the phase transition at density $1/3$ for triangular
random groups. We study the behavior at density $1/3$
more carefully. In particular,
we show that there exist 
constants $c, C>0$ such that if $p \le {c}/{n^2}$ then a.a.s. 
the random group in $\Gamma(n,p)$ is a free group, 
while for $p \ge {C\log n}/{n^2}$ a.a.s. 
it has Kazhdan's property (T). 
What is more interesting, we identify a range 
for parameter $p$ for which a.a.s. the random group 
in $\Gamma(n,p)$ is neither free nor has property (T). 
The main results 
of this paper are stated as the following three theorems.

\begin{theorem}\label{th:01}
There exists a constant $c > 0$ such that if $p \le {c}/{n^2}$, 
then a.a.s. $\Gamma(n,p)$ is a free group.
\end{theorem}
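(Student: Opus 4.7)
The plan is to reduce freeness of $\Gamma(n,p)$ to a combinatorial peeling condition on the hypergraph of relators, and then verify that this condition holds a.a.s.\ when $c$ is small. To each relator $abc\in R$ associate the multiset hyperedge $\{|a|,|b|,|c|\}$ on the $n$-element vertex set $S$ of generators, and let $H$ denote the resulting $3$-uniform multi-hypergraph. Call a generator $x$ a \emph{leaf} if the letter $x^{\pm 1}$ occurs (in total) exactly once in the relators of $R$. If some relator $r=abc$ contains a leaf $x$, the Tietze transformation that solves $r=1$ for $x$ and eliminates $x$ from the generating set removes $x$ from $S$ and $r$ from $R$ without altering any other relator (since $x$ appears nowhere else) and hence preserves the presented group. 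Consequently, if the iterated removal of (leaf, incident relator) pairs empties $R$, the resulting presentation has no relations, and $\Gamma(n,p)$ is a free group on the surviving generators. This emptying is exactly the statement that the \emph{$2$-core} of $H$ (the largest subhypergraph of minimum vertex degree $\ge 2$) is empty.

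It thus remains to show that the $2$-core of $H$ is empty a.a.s.\ when $p\le c/n^2$ with $c$ a small positive constant. Since the number of cyclically reduced words of length $3$ is $\Theta(n^3)$, $H$ has $\Theta(cn)$ hyperedges a.a.s., so it is very sparse. A first-moment argument is the natural tool: any prospective $2$-core with $V$ vertices and $E$ hyperedges must satisfy $3E\ge 2V$, and the expected number of such subconfigurations is at most
\[
\binom{n}{V}\binom{V^3/6}{E}p^{E}.
\]
For bounded $V,E$ this contributes $O(n^{V-2E})=o(1)$ using $3E\ge 2V$. For $V$ of order $n$, one optimises over $E\ge 2V/3$ and uses $p\le c/n^2$ together with Stirling/entropy estimates to show the whole sum is $o(1)$ provided $c$ lies below a positive constant threshold. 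This is essentially the familiar $2$-core threshold statement for random $3$-uniform hypergraphs, adapted to the binomial multi-hypergraph that arises here.

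The main technical obstacle is the linear-size range $V=\Theta(n)$ in the first-moment bound, where the combinatorial factors must be controlled precisely to obtain an estimate that is small uniformly in $V$ and $E$. A cleaner alternative that sidesteps this enumeration is to analyse the peeling process directly in rounds: at each round, remove every hyperedge containing a current leaf. Using the sparsity of $H$ (average vertex degree $\Theta(c)$), one shows that the expected number of surviving hyperedges shrinks by a constant factor per round, so after $O(\log n)$ rounds no hyperedges remain a.a.s. Either route yields the desired conclusion that $\Gamma(n,p)$ is a.a.s.\ free on the generators that survive the iterated Tietze reductions.
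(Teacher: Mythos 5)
Your reduction is correct and is, at its core, the same as the paper's: your ``leaf'' (a generator whose letter occurs, counted with multiplicity, exactly once among all relators) is exactly the paper's pivotal vertex of degree one, your Tietze elimination is the paper's generator-by-generator elimination, and the statement ``the $2$-core of the relator multi-hypergraph is empty'' is precisely the paper's Lemma~2.1 (``every nonempty $R'\subseteq R$ contains a relator with a pivotal letter of total multiplicity one''). You were right to use multiset hyperedges so that a relator of the form $aab$ gives $a$ degree two: this is the point of the paper's classification into types $1$, $2$, $3$, and missing it would break the group-theoretic step. Where you genuinely diverge is in how the probabilistic statement is verified. The paper first invokes the Schmidt-Pruzan--Shamir theorem to show that a.a.s.\ all components of the $3$-uniform part have $O(\log n)$ vertices, and then runs the first-moment calculation only over subgraphs of that size (handling the degenerate $aaa$ and $aab$ relators by separate easy estimates); you instead propose a global first moment over all $(V,E)$ with $3E\ge 2V$, i.e.\ the standard $2$-core threshold computation. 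Your route works and is arguably more self-contained, but the part you explicitly defer --- the uniform-in-$V$ estimate for $V=\Theta(n)$ --- is exactly the work the paper's component-size bound is designed to avoid; writing it out, one gets a bound of the shape $\bigl[O(1)\,c^{2/3}(V/n)^{1/3}\bigr]^V$ per term (after absorbing the factor of at most $48$ relators per triple into the edge probability), which is summable for small $c$, so there is no gap in principle, only an omitted computation. Your alternative round-based peeling sketch is weaker as stated: ``the expected number of surviving hyperedges shrinks by a constant factor per round'' is not immediate because the rounds are not independent, and justifying it would require a branching-process or local-tree-structure argument that is at least as much work as either first-moment route.
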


\begin{theorem}\label{th:02}
There exist constants $C', c' > 0$ such that 
if ${C'}/{n^2} \le p \le c'{\log n}/{n^2}$, 
then a.a.s. $\Gamma(n,p)$ is neither a free group nor has Kazhdan's property (T).
\end{theorem}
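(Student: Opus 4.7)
The statement combines two independent assertions---that $\Gamma(n,p)$ is a.a.s.\ not free and a.a.s.\ does not have Kazhdan's property (T) throughout the interval $C'/n^2\le p\le c'\log n/n^2$---which I would prove separately.

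\emph{Failure of (T).} The plan is to show that a.a.s.\ some generator $s\in S$ is \emph{isolated}, meaning that neither $s$ nor $s^{-1}$ appears as a letter in any relation of $R$. Such an $s$ yields a free factor $\langle s\rangle\cong\mathbb{Z}$ in $\Gamma(n,p)$, which consequently surjects onto $\mathbb{Z}$; since property (T) passes to quotients and $\mathbb{Z}$ lacks it, so does $\Gamma(n,p)$. The number of cyclically reduced length-three words in which $s$ or $s^{-1}$ appears is $An^2+O(n)$ for some explicit constant $A>0$, so
\[
\Pr(s\text{ is isolated}) \;=\; (1-p)^{An^2+O(n)} \;\ge\; \exp\bigl(-2Apn^2\bigr) \;\ge\; n^{-2Ac'},
\]
and the expected number of isolated generators is at least $n^{1-2Ac'}$, which tends to infinity provided $c'<1/(2A)$. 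Since the joint probability that two distinct generators are simultaneously isolated differs from the product of the marginals only by a factor $1+O(p)$, a standard second-moment calculation then forces an isolated generator to exist a.a.s.

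\emph{Failure of freeness.} Here my plan is to exhibit nontrivial $\mathbb{Z}$-torsion in the abelianisation of $\Gamma(n,p)$, since the abelianisation of a free group is torsion-free. The most na\"ive local obstruction---a pair of relations of the form $abc$ and $a^{-1}bc$, whose abelianised forms force $a^2=1$---does not appear a.a.s., because the expected number of such pairs is of order $n^3p^2\le (C')^2/n\to 0$. More generally, a configuration of $k$ triangular relations spanning $v$ distinct generators appears in $R$ an expected $\Theta\bigl(n^{v-2k}(C')^k\bigr)$ times, so only configurations with $v\le 2k$ can be exploited; unfortunately every \emph{isolated} configuration with $v=2k$---the ones one can hope to find a.a.s.\ once $C'$ is large---turns out on inspection to present a free group, so no bounded local obstruction does the job.

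The main obstacle is therefore to extract torsion from the global random structure of $R$ rather than from any single bounded sub-configuration. A natural route is to analyse the abelianised relation matrix $M\in\mathbb{Z}^{m\times n}$, whose rows are of the form $\pm e_a\pm e_b\pm e_c$, and to show that for $C'$ sufficiently large the $\mathbb{F}_2$-rank of $M$ is a.a.s.\ strictly below its rational rank: this already forces $2$-torsion in $\mathrm{coker}(M)$ by Smith normal form, hence torsion in the abelianisation of $\Gamma(n,p)$, and so prevents freeness. Carrying out this sparse random-matrix analysis, where the rows are not uniformly random $\{0,\pm1\}$-vectors but have a very specific combinatorial form tied to the triangular model, is the technical heart of the argument.
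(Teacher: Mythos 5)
Your argument for the failure of property (T) is essentially the paper's: you find, by a first- and second-moment computation, a generator $s$ such that neither $s$ nor $s^{-1}$ occurs in any relation, and this half is correct as it stands (the paper concludes via the nontrivial free product splitting $\Gamma=\langle s\rangle*\langle S\setminus\{s\}\rangle$, invoking the nontriviality of generators; your variant of quotienting onto $\mathbb{Z}$ is, if anything, slightly more economical).

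The non-freeness half, however, has a genuine gap: you correctly diagnose that no bounded local configuration yields an obstruction, but the global argument you then sketch --- showing that the $\mathbb{F}_2$-rank of the abelianised relation matrix $M$ is a.a.s.\ strictly below its rational rank, hence that $\mathrm{coker}(M)$ has $2$-torsion --- is not carried out, and you yourself flag it as the unproven ``technical heart.'' Worse, it is doubtful that this statement is even true: with $t=\Theta(n\log n)$ relations, the reduction of $M$ modulo $2$ has rows $e_a+e_b+e_c$ for random triples, and such a heavily overdetermined sparse system should a.a.s.\ have full rank $n$ (minus the isolated generators, whose columns vanish over every field alike), in which case the $\mathbb{F}_2$-rank equals the rational rank and no $2$-torsion is produced. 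So the plan is not merely incomplete; it is likely aimed at a false intermediate claim. The paper avoids torsion entirely and instead uses an Euler characteristic argument: by an isoperimetric inequality for reduced van Kampen diagrams at density $d<1/2$ (Ollivier/Gromov), the presentation complex $C_P$ is a.a.s.\ aspherical, hence a classifying space, so $\chi(\Gamma)=1-n+t$; a Chebyshev estimate shows $t\ge 4n$ a.a.s.\ once $p\ge 3/n^2$, so $\chi(\Gamma)>0$, whereas a free group has non-positive Euler characteristic. The asphericity input is the essential idea your proposal is missing --- without it (or some comparable global tool) the count of relations alone says nothing, since relations could in principle be redundant.
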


\begin{theorem}\label{th:03}
There exists a constant $C > 0$ such that 
if $p \ge C{\log n}/{n^2}$, 
then a.a.s. $\Gamma(n,p)$ has Kazhdan's property (T).
\end{theorem}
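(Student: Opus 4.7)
The plan is to verify \.Zuk's spectral criterion for property~(T) applied to the link graph $L$ associated with the (symmetrized) random presentation. Recall that, after closing $R$ under cyclic permutations and inversions, the link graph $L$ has vertex set $S\cup S^{-1}$ (so $|V(L)|=2n$) and, for each relator $a_1 a_2 a_3$, contains the edges $\{a_1^{-1},a_2\}$, $\{a_2^{-1},a_3\}$, $\{a_3^{-1},a_1\}$. By \.Zuk's theorem, it is enough to show that a.a.s.\ $L$ is connected and the smallest positive eigenvalue of its normalized Laplacian exceeds $1/2$, i.e.\ that the second largest eigenvalue of the normalized adjacency matrix of $L$ is less than $1/2$.

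The first step is an edge-probability computation. For each admissible pair $\{x,y\}$ of vertices of $L$ there are $\Theta(n)$ relators whose inclusion in $R$ would produce the edge $\{x,y\}$ (one for each choice of the third letter, each cyclic position, and each orientation). Therefore
\[
\Pr[\{x,y\}\in E(L)] = 1-(1-p)^{\Theta(n)} = \Theta(np),
\]
which for $p = C\log n/n^2$ equals $\Theta(\log n/n)$, and each vertex has expected degree $\Theta(\log n)$. A Chernoff/Janson-type concentration argument, using that two edge indicators of $L$ are dependent only when they can arise from a common relator, then shows that with $C$ large enough, a.a.s.\ every vertex of $L$ has degree within a factor $1+o(1)$ of the mean, and in particular $L$ is a.a.s.\ connected.

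The core of the argument is the spectral gap estimate. Here I would decompose
\[
A_L = \sum_{r\in R_{\mathrm{sym}}} A_r,
\]
where $A_r$ is the adjacency matrix of the three edges contributed by the symmetrized relator $r$, and bound the operator norm of $A_L - \E A_L$ restricted to the orthogonal complement of the trivial eigenvector. The plan is to run the Kahn--Szemer\'edi trace method adapted to this dependence structure, or alternatively to couple $L$ with the Erd\H{o}s--R\'enyi graph $G(2n,q)$ for $q\asymp \log n/n$ and transfer sharp spectral estimates of Feige--Ofek type. For $C$ sufficiently large, this forces the nontrivial spectrum of the normalized adjacency matrix into $(-1/2,1/2)$, so \.Zuk's criterion applies and $\Gamma(n,p)$ has property~(T) a.a.s. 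The main obstacle is precisely this spectral estimate: the edges of $L$ are neither independent nor exchangeable, so classical spectral bounds for $G(N,q)$ cannot be invoked off the shelf and must be carefully adapted to the triangular dependence structure induced by the relators, in particular for the typical low-degree vertices appearing at the threshold $p\asymp \log n/n^2$.
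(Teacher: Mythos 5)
Your overall framework is the same as the paper's: invoke \.Zuk's spectral criterion for the link graph $L$ on $S\cup S^{-1}$ and show that a.a.s.\ $\lambda_2[\mathcal{L}(L)]>1/2$. The edge-probability and degree computations are correct, and the reformulation in terms of the second largest eigenvalue of the normalized adjacency matrix is fine. However, the decisive step --- the spectral gap estimate for $L$ --- is exactly the point you leave open: you name two candidate methods (an adapted Kahn--Szemer\'edi trace argument, or a coupling with $G(2n,q)$ plus Feige--Ofek-type bounds) without carrying either out, and you yourself identify the dependence between edges of $L$ as ``the main obstacle.'' As written this is a plan, not a proof; neither route is routine. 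A direct coupling does not transfer eigenvalue bounds, since the spectral gap is not monotone under adding or deleting edges, and the trace method at average degree $\Theta(\log n)$ with relator-induced dependencies would require substantial new work.

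The paper closes this gap with an idea you are missing: split $L$ into three graphs $L_1,L_2,L_3$, where each relation $abc$ contributes $\{a,b^{-1}\}$ to $L_1$, $\{b,c^{-1}\}$ to $L_2$ and $\{c,a^{-1}\}$ to $L_3$. Within each $L_i$ the edges are generated by independent relator choices, so after controlling multiplicities (a.a.s.\ the multiple edges form a matching) and the slightly deficient pairs $\{a,a^{-1}\}$, each $L_i$ differs from a genuine $G(2n,\rho)$ with $\rho=1-(1-p)^{4n-4}$ only by adding one matching and deleting another. Coja-Oghlan's theorem then gives $\lambda_2[\mathcal{L}(L_i')]>1-\epsilon$, a perturbation lemma for near-regular graphs (adding a bounded-degree graph moves the spectral gap by at most $\epsilon/(1-\epsilon)$) transfers this to $L_i$, and a Courant--Fischer argument --- carefully tracking that the top eigenvectors $(\sqrt{d_{L_i}(v)})_v$ of the pieces differ from that of $L$, a point your plan glosses over --- yields $\lambda_{2n-1}[I-\mathcal{L}(L)]\le\sum_i\lambda_{2n-1}[I-\mathcal{L}(L_i)]\le 9\epsilon$. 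To complete your proof you would need either this decomposition or a genuinely worked-out version of one of the two methods you sketch.
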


\section{Proof of Theorem \ref{th:01}}

We shall deduce Theorem \ref{th:01} from the following Lemma.

\begin{lemma}\label{lemma:2.1}
There exists a constant $c>0$ such that for $p\le c/n^2$ a.a.s. 
the random group $\Gamma(n,p)=\langle S|R\rangle$ 
has the following property:
\begin{quote}
for every nonempty subset $R'\subseteq R$ 
of relations there exists $a\in S$ and $r\in R'$ 
such that neither $a$ nor $a^{-1}$ appears in any relation 
$t\in R'\setminus \{r\}$ and precisely one letter 
in $r$ belongs to the set $\{a,a^{-1}\}$.     
\end{quote}
\end{lemma}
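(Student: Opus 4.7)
The plan is to apply the first-moment method. Declare $R'\subseteq R$ \emph{bad} if no relation $r\in R'$ admits a generator $a$ satisfying both (i) exactly one letter of $r$ belongs to $\{a,a^{-1}\}$, and (ii) neither $a$ nor $a^{-1}$ occurs in $R'\setminus\{r\}$. The conclusion of the lemma is equivalent to the a.a.s.\ non-existence of any nonempty bad $R'\subseteq R$, so by Markov's inequality it suffices to show that the expected number of bad $R'$ tends to $0$.

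The key structural observation is that if $R'$ is bad then, for each generator $a$ appearing somewhere in $R'$, the total number of positions in relations of $R'$ occupied by $a$ or $a^{-1}$ is at least $2$. Indeed, if $a$ occurs in only one relation $r\in R'$, the failure of (i)--(ii) forces $a$ to occur at least twice in $r$; otherwise $a$ occurs in at least two distinct relations and each contributes a position. Summing over generators and using that each relation contributes exactly three positions gives $3m\ge 2s$, where $m=|R'|$ and $s$ is the number of generators occurring in $R'$.

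Combining this constraint with the fact that the number of cyclically reduced length-$3$ words on any fixed set of $s$ generators is at most $(2s)^3$, one obtains
\[
\E\bigl[\#\text{ bad }R'\bigr]\le\sum_{m\ge 1}\sum_{s\le 3m/2}\binom{n}{s}\binom{(2s)^3}{m}p^m.
\]
Applying standard estimates $\binom{n}{s}\le (en/s)^s$ and $\binom{(2s)^3}{m}\le (8es^3/m)^m$, substituting $p\le c/n^2$, and noting that the inner sum is dominated by $s$ close to $3m/2$, each summand takes the form $(Ac)^m(m/n)^{m/2}$, up to polynomial factors, for an absolute constant $A$ (a short computation gives $A=8e^{5/2}(3/2)^{3/2}$). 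The principal obstacle is controlling this sum in the linear regime $m=\Theta(n)$, where $(m/n)^{m/2}$ stays close to $1$; however, for $c$ small enough that $Ac$ is bounded sufficiently below $1$, the exponential decay from $(Ac)^m$ dominates and the full sum is $o(1)$. Markov's inequality then yields the lemma.
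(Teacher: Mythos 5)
Your proof is correct, but it takes a genuinely different route from the paper's. The paper reformulates the statement in terms of an auxiliary random hypergraph built from the relations, invokes the Schmidt--Pruzan--Shamir theorem to confine all components to size $O(\log n)$, and then runs a first-moment count only over connected subhypergraphs of logarithmic size to show that every non-trivial component has at least two ``pivotal'' vertices of degree one (plus a separate argument that the $2$-edges form a matching meeting each component at most once). You instead run the union bound directly over all candidate bad subsets $R'$ of every size, using the observation that badness forces each generator occurring in $R'$ to occupy at least two of the $3m$ letter-positions, whence $2s\le 3m$. This is essentially the same combinatorial inequality that underlies the paper's edge count $\lceil(2k-1)/3\rceil$ for components with at most one degree-one vertex; the difference lies entirely in how the union bound over large subsets is tamed --- the paper imports the component-size theorem so that it never sums over sets of more than $O(\log n)$ relations, whereas you rely on the exponential decay of $(Ac)^m$ to kill the terms with $m=\Theta(n)$ and beyond. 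Your route is more self-contained (no hypergraph component theory, no case analysis by relation type), at the cost of a slightly more delicate summation: you should note that for $m>2n/3$ the constraint $s\le 3m/2$ is superseded by $s\le n$, so the dominant term changes, but there $\binom{n}{s}\binom{8s^3}{m}p^m\le e^{n}(8ec)^m\le(8e^{2}c)^m$ still gives geometric decay; and to get $o(1)$ rather than merely $O(1)$ one uses that the terms with $m\le\sqrt{n}$ carry an extra factor $(m/n)^{m/2}\le n^{-m/4}$. Both arguments require $c$ below an explicit absolute constant.
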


Before we prove the above result let us observe that it 
implies Theorem~\ref{th:01}.

\begin{proof}[Proof of Theorem~\ref{th:01}]
Indeed, if the random group $\Gamma(n,p)=\langle S|R\rangle$
has the property described in the assertion of Lemma~\ref{lemma:2.1},
then we can eliminate generators one by one each time decreasing 
the size of both the set of generators and the set of relations by one.
Eventually we end up with a free group with $|S|-|R|$ generators.
\end{proof}

We shall deduce  Lemma~\ref{lemma:2.1} from the corresponding result
on random hypergraphs. To this end let us 
partition all relations from $R$ into three different types.
 
\begin{itemize}
 \item Relations of \textit{type 1} are relations of the form $aaa$.
 \item  Relations of \textit{type 2} are  relations of 
the form $aab$, $aba$ and $baa$, 
where $a\neq b$. For such a relation 
$b$ is called a pivotal term. 

 \item Relations of \textit{type 3} are all the remaining relations, 
that is relations of the form $abc$, where  
$a\neq b$, $b\neq c$ and $c\neq a$. 
In this case each element of the relation is pivotal.
\end{itemize}

Now let us  introduce an auxiliary random hypergraph
(in fact, multi-hypergraph) 
$\mathcal{H} = \mathcal{H}(\Gamma(n,p))$.
The set of vertices of $\mathcal{H}$ consists of all 
generators of $\Gamma(n,p)$, i.e. it coincides with the set $S$. 
Every relation in 
$\Gamma(n,p)$ generates a hyperedge in $\mathcal{H}$. 
For relation $r$ of type 3 we generate a 3-edge 
$E=\{a, b, c\}$ consisting of three generators of $S$
such that $r$ is build from the letters of $E\cup E^{-1}$. 
If $r$ is a relation of type 2, 
then we generate a 2-edge $E = \{a, b\}$ such that $r$ 
is build from the letters of 
$E\cup E^{-1}$ and mark an appropriate element as the pivotal 
for this edge. Finally, 
if $r$ is a relation of type 1, i.e. we have either $r=aaa$ or 
$r=a^{-1}a^{-1}a^{-1}$ for some $a\in S$,  
then we put the 1-edge $\{a\}$ in $\mathcal{H}$.

Our aim is to prove the following lemma which immediately 
implies  Lemma \ref{lemma:2.1}.

\begin{lemma}\label{lemma:2.2}
There exists a constant $c>0$ such that for $p\le c/n^2$ the
following holds.  A.a.s. 
\begin{quote}
 for every nonempty subset $F$ of hyperedges of 
$\mathcal{H}=\mathcal{H}(\Gamma(n,p))$ there
exists a vertex $v$ of  $\mathcal{H}$  and a hyperedge $E\in F$ 
such that $E$ is the only hyperedge from $F$ containing $v$
and, moreover, $v$ is pivotal for $E$. 
\end{quote}
\end{lemma}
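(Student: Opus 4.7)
The plan is a first-moment argument showing that, asymptotically almost surely, no nonempty subset $F$ of hyperedges of $\mathcal{H}$ violates the pivotal-leaf condition. Call such an $F$ \emph{bad}; equivalently, $F$ is bad iff every pivotal vertex of every $E\in F$ has $F$-degree at least $2$. First I would observe that a $1$-edge is automatically bad on its own, since it has no pivotal vertex; but the expected number of $1$-edges in $\mathcal{H}$ is at most $2np=O(1/n)$, so a.a.s.\ none are present and it suffices to consider $F$'s consisting of $2$- and $3$-edges.

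The first real ingredient would be a structural bound on any bad $F$. Writing $k_2,k_3$ for the numbers of $2$- and $3$-edges in $F$, setting $k=k_2+k_3$, and letting $m=|V(F)|$, I would partition $V(F)$ into vertices appearing in some pivotal position (each of which has $F$-degree at least $2$ by the bad hypothesis) and vertices appearing only in the non-pivotal slot of some $2$-edge (of which there are at most $k_2$). Comparing this with the handshake identity $\sum_v d_F(v)=2k_2+3k_3$ yields the key inequality $m \le 3k/2$.

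The main step is then a union bound over bad configurations. Given $k$ and $m$, the number of candidate $F$'s is at most $\binom{n}{m}\binom{N_m}{k}$, where $N_m=m(m-1)+\binom{m}{3}\le m^3$ is an upper bound on the number of $2$-edges (with pivotal mark) and $3$-edges on $m$ labeled vertices. Each specific such hyperedge lies in $\mathcal{H}$ with probability at most $48p$, since it arises from at most $48$ different length-$3$ relations; and since distinct hyperedges correspond to disjoint sets of relations, the events $\{E\in\mathcal{H}\}$ for distinct $E$ are independent, so $\Pr[E_1,\dots,E_k\in\mathcal{H}]\le (48p)^k$. Substituting $m \le 3k/2$ and $p \le c/n^2$ and applying Stirling, each term simplifies to something of order $[A^2 c^2\, k/n]^{k/2}$ for an explicit absolute constant $A$. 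For $c$ small enough, the sum over $k$ up to $n/(Ac)^2$ is then dominated by the $k=2$ term, which is of order $1/n$.

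The main obstacle is the tail $k > n/(Ac)^2$, where $3k/2$ exceeds $n$ and the structural bound $m \le 3k/2$ becomes vacuous. In this regime I would switch to the cruder estimate based on $m \le n$, namely $\binom{m^3}{k}(48p)^k \le (\mathrm{e}\,C' c n/k)^k$ with $C'=48$, which is geometrically small once $k$ exceeds $2\mathrm{e}C'cn$. By choosing $c$ small enough, the ``small-$k$'' and ``large-$k$'' ranges overlap and together cover every $2 \le k \le 2n/3$, so the overall expected number of bad $F$ is $o(1)$. The delicate point is precisely this matching: one must pick $c$ small enough to beat the combinatorial factors in the dense-core regime ($m$ growing like $k$) and the spread-out regime ($m$ saturated at $n$) simultaneously.
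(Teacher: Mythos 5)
Your argument is correct and reaches the same conclusion as the paper by a genuinely different route. The paper first invokes the Schmidt-Pruzan--Shamir theorem to conclude that a.a.s.\ every component of the $3$-uniform part $\mathcal{H}_3$ has $O(\log n)$ vertices, and then runs two separate first-moment computations: every nontrivial component of $\mathcal{H}_3$ has at least two degree-one vertices (using the edge-count bound $\lceil(2k-1)/3\rceil\ge 3k/5$ for components with at most one leaf), and each component of $\mathcal{H}_3$ meets at most one $2$-edge while the $2$-edges form a matching; the lemma then follows from a short case analysis on whether $F$ contains a $3$-edge. You instead run a single global union bound over all ``bad'' subconfigurations, with the density inequality $m\le 3k/2$ --- derived from the degree-$\ge 2$ condition at pivotal vertices plus the handshake identity --- playing the role that the component-size theorem plays in the paper. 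Your route is more self-contained (no external theorem on subcritical hypergraph components) and treats $2$- and $3$-edges uniformly instead of via the leaf/matching case split; the price is that you must handle the large-$k$ tail explicitly, which the paper avoids because everything is localized to components of logarithmic size. Your handling of that tail is right: the two regimes overlap for $c$ small, and the crude bound $(48ecn/k)^k\le 2^{-k}$ in fact covers all $k\ge 192ecn$, not merely $k\le 2n/3$, provided you count $F$ there by $\binom{2n^3}{k}$ directly rather than carrying the factor $\binom{n}{m}\le 2^n$.

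Two small points to tighten. First, $\mathcal{H}$ is a multi-hypergraph: a hyperedge generated by two distinct relations gives all of its pivotal vertices $F$-degree $\ge 2$ ``for free,'' and such an $F$ is not counted by your union bound over sets of distinct hyperedges. Add the one-line observation that a.a.s.\ no hyperedge is repeated (the expected number of coinciding pairs of relations is $O(n^3p^2)=O(1/n)$); the paper's own computation is equally silent on this. Second, at $k=n/(Ac)^2$ your term $[A^2c^2k/n]^{k/2}$ equals $1$, so the small-$k$ sum should be cut off strictly before that point (say where $A^2c^2k/n\le 1/2$); since you explicitly arrange the large-$k$ regime to take over at $k=\Theta(cn)$, which for small $c$ lies well below this cutoff, this is only a matter of stating the threshold correctly.
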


\begin{proof}[Proof of Lemma \ref{lemma:2.2}]
It is enough to show that the lemma is true for some $p = c/n^2$, 
where $c>0$ is a constant to be chosen later. 

Observe first that the probability that there is a type~1 edge 
in $\mathcal{H}$ tends to 0 as $n\rightarrow\infty$. 
Indeed, let $X$ denote the random variable which counts 
such edges in $\mathcal{H}$. Then
$$\mathbb{P}(\mathcal{H} \text{ has a 1-edge}) \leq \E X=2np 
\le  {2c/n}\to 0.$$
Thus, we may and shall assume that $\mathcal{H}$ contains  
only edges of type either 2 or 3.

Our further argument is based on the fact that in the 3-uniform 
random hypergraph in which the expected number of 
edges is $an$, where $a$ is a small positive constant, 
a.a.s. all components are of size $O(\log n)$. 
More specifically, we shall use 
the following  special case of a 
theorem of Schmidt-Pruzan and Shamir \cite{S-PS1985}.

\begin{theorem}[Schmidt-Pruzan, Shamir \cite{S-PS1985}]\label{th:S-PS}
Let $\mathcal{H}(n,\rho)$ be the random hypergraph with vertex set 
$[n]=\{1,2,\dots,n\}$, 
in which each subset of $[n]$ with three vertices appears 
independently with probability $\rho$.
Moreover, let $2\rho\binom n2\le C$, 
where $C$ is a constant such that $C<1$.  
Then, there exists a constant $K_C$ such that with probability 
$1-o(n^{-2})$ the largest connected component 
in $\mathcal{H}$ contains less than $K_C\log n$ vertices. 
\end{theorem}

Now let $\Ht$ denote the subgraph of $\mathcal{H}$ 
which consists of all its edges of size three. For each such edge there are 
48 different relations which can generate it.
Therefore, due to Theorem~\ref{th:S-PS}, each component in $\Ht$ has fewer 
than $K\log n$ vertices for some constant $K$ depending only on $c$ and provided $c < 1/48$. 
We show first that each  connected subgraph of 
$\Ht$, other than an isolated vertex,   contains at least 
two vertices belonging to exactly one edge of $\Ht$.
Indeed, let us assume that it is not the case and by 
$X$ we denote the number of non-trivial connected subgraphs on $k$ vertices, 
$4\le k\le K\log n$,
contained in $\Ht$ in which all but at most one vertex belongs to 
at least two edges. Each such subgraph has at least 
$\lceil(2k-1)/3\rceil\ge 3k/5$ edges. Thus, 
instead of the random variable $X$ we consider another random variable
$Y$ which counts graphs on $k$ vertices, $4\le k\le K\log n$, 
with exactly $\lceil 3k/5\rceil$ edges.
Let $\rho$ be the probability that there is an edge in $\Ht$ containing any given three vertices. Obviously, 
$\rho \leq 48p$. Furthermore, the probability that given $k$ vertices span a subgraph of $\Ht$ with 
 $\lceil 3k/5\rceil $ edges is bounded from above by
$$ \binom{\binom{k}{3}} 
{ \lceil 3k/5\rceil }\rho^{\lceil 3k/5\rceil }
\leq (ek^2 /3)^{\lceil 3k/5\rceil }
(48p)^{\lceil 3k/5\rceil }\le (16ek^2p)^{ 3k/5 }.$$

Hence for the expectation 
$\E Y$ of $Y$ we have
\begin{align*}
 \E Y & \leq  \sum_{k=4}^{K\log n}\binom{n}{ k}(16ek^2p)^{ 3k/5} 
\leq  \sum_{k=4}^{K\log n} \left( \frac{en}{k}\right)^k 
\left(\frac{16ek^3c}{n^2} \right)^{3k/5 } 
  \nonumber \\ 
& \leq  \sum_{k=4}^{K\log n}
 \left(\frac{16^3 e^{8} c^3 k^4}{n}\right)^{k/5} 
\leq K\log n \left(\frac{16^3 e^{8} c^3 (K\log n)^4}{n}\right)^{{4}/{5}} \to 0.
\end{align*}

Since clearly
$$\Pr(X>0)\leq \Pr(Y>0)\leq \E Y,$$
 a.a.s. in each non-trivial connected subgraph of $\Ht$ 
at least two vertices belong to exactly one edge.

As for edges of size two in $\mathcal H$ we shall show that  
 a.a.s.\ each component of $\Ht$ shares a vertex 
with at most one such edge. 
To this end let $Z$ count components of $\Ht$ 
(including trivial ones) 
for which it is not the case. Then, 

\begin{equation*}
 \E Z \leq n (K\log n)^2 n^2 (24p)^2 \le 24^2 K^2 c^2(\log n)^2/n \to 0,
\end{equation*}
and so a.a.s.\ each component of $\Ht$ intersects at most one 
edge of $\mathcal{H}$ of size two; in particular, a.a.s. all edges 
of size two form a matching.

It is easy to see that from the above two statements 
the assertion of Lemma~\ref{lemma:2.2} easily follows.
Indeed, take any subset $F$ of hyperedges of $\mathcal{H}$,
and let $F'$ be the subhypergraph of $F$ which 
consists of edges of size three.
If $F'$ is not empty, then a.a.s. each of its non-trivial
components contains at least two vertices of degree one, and at most 
one such vertex belongs in $F$ to an edge of size two.
Thus, a.a.s. there is at least one vertex in $F$ 
 which belongs to precisely one edge in $F$ and this edge is of size three. Now let us suppose that $F'$ is empty, i.e.
all  edges of $F$ are of size two. Then they a.a.s. form 
a matching and it is enough  to take the pivotal vertex of 
one of these edges. 
\end{proof}

\begin{proof}[Proof of Lemma \ref{lemma:2.1}]
Lemma~\ref{lemma:2.1} is a straightforward consequence of \break
Lemma~\ref{lemma:2.2}.
\end{proof}

\section{Proof of Theorem \ref{th:02}}\label{sec3}

We begin with recalling some rather well known results concerning
triangle random groups. We also briefly comment on the
arguments used to verify them,
as explicit proofs seem not to be present in the literature.

We start with a few preliminary comments. In his paper \cite{Z2003}, \.{Z}uk
studied triangle groups $\Gamma(n,t)$, where $n$ denotes
the cardinality of a generating set $S$ and $t$ denotes the cardinality
of a set $R$ of relations
chosen uniformly at random out of all subsets of cardinality $t$ of the set
of all cyclically reduced words of length three
over the alphabet $S\cup S^{-1}$. \.{Z}uk used the so called density
approach to asymptotic properties of random groups, in which we let
$n\to\infty$ and we keep $t\sim n^{3d+o(1)}$, where $d\in[0,1]$ is 
the (constant) density parameter. 
Let us remark also that, as it is  well known from the general theory of
random structures, if $t$ is close to the expected number of 
relations in $\Gamma(n,p)$, i.e. $t=(8+o(1))n^3p$, then 
the asymptotic behavior  of $\Gamma(n,t)$ and $\Gamma(n,p)$ is very similar 
for a large class of properties, 
including all considered in this paper (see, for instance, 
\cite{JLR}, Chapter 1.4).







The following result, very useful in our further developments, 
is essentially due to Ollivier. We follow the notation of \cite{[O1]} concerning van 
Kampen diagrams $D$. In particular, $|D|$ denotes the number of 2-cells in $D$,
and $|\partial D|$ is the boundary length of $D$ (i.e. the length of the word
for which $D$ is a van Kampen diagram).

\begin{lemma}\label{lemma3.1}
Let $\Gamma(n,p)$ be the triangle random group such that 
$p=n^{3(d-1)+o(1)}$
for some $d<1/2$. Then 
for any $\epsilon>0$, a.a.s.
all reduced van Kampen diagrams $D$ for  $\Gamma(n,p)$
satisfy the isoperimetric inequality
$$
|\partial D|\ge 3(1-2d-\epsilon)|D|.
$$ 
\end{lemma}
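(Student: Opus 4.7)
The plan is to apply the first moment method to the number of reduced van Kampen diagrams that violate the desired inequality. For fixed $\epsilon>0$ and $k\ge 1$, let $X_k$ denote the number of reduced van Kampen diagrams $D$ for $\Gamma(n,p)$ with $|D|=k$ and $|\partial D|<3(1-2d-\epsilon)k$; I aim to show that $\sum_{k\ge 1} \E X_k=o(1)$, from which the assertion follows at once by Markov's inequality.

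The first step is the elementary Euler-type identity for triangular $2$-complexes: if $D$ has $k$ triangular faces and $m$ internal edges (edges shared by two faces), then counting face-edge incidences yields $3k=2m+|\partial D|$. Consequently the hypothesis $|\partial D|<3(1-2d-\epsilon)k$ is equivalent to $m>3k(d+\epsilon/2)$, and the $1$-skeleton of $D$ then contains $3k-m<3k(1-d-\epsilon/2)$ edges. The second step is the standard three-factor bound on $\E X_k$ of random group theory: (i) the number of combinatorial types of connected, simply-connected planar triangular $2$-complexes with $k$ faces is bounded by $C^k$ for an absolute constant $C$; (ii) each such type admits at most $(2n)^{3k-m}$ labelings of its $1$-skeleton by letters of $S\cup S^{-1}$; and (iii) conditional on the labeling, the probability that all $k$ boundary triangles simultaneously lie in $R$ is at most $p^k$. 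Substituting $p=n^{3(d-1)+o(1)}$ and the worst-case $m$, these combine into
\begin{equation*}
\E X_k \;\le\; C^k\cdot (2n)^{3k(1-d-\epsilon/2)}\cdot n^{3k(d-1)+o(k)} \;\le\; C_1^k\cdot n^{-3k\epsilon/2+o(k)},
\end{equation*}
so for $n$ sufficiently large the series $\sum_{k\ge 1}\E X_k$ is dominated by a convergent geometric series whose sum tends to zero.

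The main obstacle is step (iii), namely justifying the $p^k$ factor when some of the $k$ boundary words happen to coincide as cyclically reduced words of length three. Reducedness of $D$ rules out the obvious failure mode of two adjacent $2$-cells glued along a common edge whose boundary relations are mutually inverse. Non-adjacent coincidences must be treated separately: each forced coincidence decreases the effective power of $p$ by one but simultaneously identifies at least three edges of the $1$-skeleton and thus reduces the exponent of $2n$ correspondingly, so the net exponent of $n$ only improves. This tradeoff is the core of Ollivier's counting strategy in~\cite{[O1]}, and carrying it out rigorously is where the density hypothesis $d<1/2$ is used: it is precisely this inequality that ensures the net exponent of $n$ stays strictly negative, and I expect this bookkeeping to be the most delicate part of the argument.
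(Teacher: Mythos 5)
Your route is genuinely different from the paper's. The paper does not attempt a single global first-moment computation: it invokes Ollivier's estimate from \cite{[O1]} only for reduced diagrams with $|D|\le K$, $K$ a fixed constant, and then applies Ollivier's deterministic propagation theorem (Theorem 8 of \cite{[O2]}) to pass from bounded-size diagrams to all diagrams. That two-step structure is not a stylistic choice; it is forced by exactly the issues you flag in step (iii), and your proposal has a genuine gap there. The face-by-face claim that each coincidence of relators ``identifies at least three edges of the $1$-skeleton and thus reduces the exponent of $2n$ correspondingly, so the net exponent of $n$ only improves'' is false in the worst case: if a face $f_2$ is forced to carry the same relator as $f_1$ but all three edges of $f_2$ are already shared with other faces (hence already labeled), the coincidence saves nothing in the labeling count while still removing a full factor of $p=n^{3(d-1)+o(1)}$ from the probability, so your bound on $\E X_k$ \emph{increases} by $n^{3-3d}$. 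Making the tradeoff work globally --- ordering the faces, tracking how many edge-labels each face contributes that are genuinely new, and relating the total deficit to $|\partial D|$ --- is the entire content of Ollivier's local estimate, not a routine check one can defer.

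More fundamentally, the bound $C^k$ in your step (i) undercounts the objects over which the union bound must run. To know which power of $p$ to charge in step (iii) you must sum over abstract diagrams that additionally record which faces carry equal (or inverse, or cyclically permuted) relators; the number of such assignments on $k$ faces is super-exponential in $k$ (of order $k^{(1-o(1))k}$), not $O(C^k)$. Your series $\sum_k \E X_k$ therefore ceases to be dominated by a geometric series once $k$ exceeds a fixed power of $n$, and reduced van Kampen diagrams of such size cannot be excluded a priori. This is precisely the obstruction that the propagation theorem is designed to circumvent: it is a purely deterministic local-to-global statement about van Kampen diagrams, so no union bound over large diagrams is ever needed. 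To repair your argument you should restrict the first-moment computation to $|D|\le K$ (where the partition factor is a constant depending on $K$), prove the inequality there with a slightly smaller $\epsilon$, and then cite or reprove the propagation step --- which is what the paper does.
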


\begin{proof}
The proof consists of two steps. First, one shows that for any
fixed number $K$ the required inequality holds a.a.s. 
for all reduced van Kampen diagrams $D$ with $|D|\le K$. Next, one uses
a propagation argument to conclude the full statement. 

In \cite{[O1]} an analog of the first step above is proved
in the context of random groups corresponding to Gromov's density model.
More precisely, it is shown that a.a.s. any reduced
van Kampen diagram with $|D|\le K$ for a density $d$ random group with relations of
length $L\to\infty$ satisfies
$$
|\partial D|\ge L(1-2d-\epsilon)|D|.
$$
The same argument applies to density $d$ triangle groups and, in view
of the remarks at the beginning of this section, yields the first
required step.

The appropriate propagation argument for the second step is Theorem 8 of \cite{[O2]}
(mentioned also as Theorem 60 in \cite{[O1]}). It applies 
directly to triangle random groups and, in view of the first step, 
concludes the proof.
\end{proof}

\medskip

Recall that,
given a presentation $P$ of the form 
$\langle a_1,\dots,a_n|r_1,\dots,r_t  \rangle$,
the {\it presentation complex} $C_P$ is a 2-dimensional cell complex 
with a single vertex $v_0$,
with edges (being oriented loops attached to $v_0$) 
corresponding to the generators
$a_1,\dots,a_n$, and  2-cells corresponding to relations 
$r_1,\dots,r_t$ attached
to the 1-skeleton accordingly. The group $\Gamma$ 
given by the presentation $P$
is (canonically isomorphic to) the fundamental group of $C_P$.  
Lemma \ref{lemma3.1} can be used to show our next result which essentially belongs to Gromov.
An outline of its proof  (in a slightly different setting) 
is given in \cite{[O3]} (the last two paragraphs of Section 2); compare also
Subsection I.3.b in \cite{[O1]}.

\begin{lemma}\label{lemma3.2}
Let $\Gamma=\Gamma(n,p)$ be the triangle random group 
such that $p=n^{3(d-1)+o(1)}$ for some $d<{1/2}$, and let $P$ denote its presentation. 
Then a.a.s. the presentation complex $C_P$
is aspherical. In particular, $C_P$ is a classifying 
space for $\Gamma$. 
\end{lemma}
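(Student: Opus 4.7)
The plan is to establish asphericity of $C_P$ via the standard criterion that a 2-complex is aspherical if and only if it admits no nontrivial reduced spherical diagram (a combinatorial cellular map $S^2\to C_P$ with no adjacent pair of 2-cells cancelling each other). Since $C_P$ is 2-dimensional with $\pi_1(C_P)=\Gamma$, asphericity will immediately yield that $C_P$ is a classifying space for $\Gamma$, so it suffices to show that a.a.s.\ no nontrivial reduced spherical diagram for $P$ exists.

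The first step is to bound the size of any putative such $D$. Removing the interior of a single triangular 2-cell of $D$ produces a reduced disk van Kampen diagram $D'$ with $|D'|=|D|-1$ and $|\partial D'|=3$ (reducedness persists because any cancelling pair in $D'$ would also be one in $D$). Fixing $\epsilon>0$ with $1-2d-\epsilon>0$ (possible as $d<1/2$), Lemma~\ref{lemma3.1} applied a.a.s.\ to $D'$ gives
$$3=|\partial D'|\ge 3(1-2d-\epsilon)(|D|-1),$$
hence $|D|\le K_0:=1+(1-2d-\epsilon)^{-1}$, a constant depending only on $d$.

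The second step is a first-moment calculation ruling out reduced spherical diagrams of size $k\le K_0$. For each such $k$ and each combinatorial type of spherical triangulation with $k$ triangular faces (at most $C^k$ types for an absolute constant $C$), Euler's formula gives $E=3k/2$ edges, so the number of edge-labelings by letters in $S\cup S^{-1}$ is at most $(2n)^{3k/2}$. For a generic labeling with pairwise distinct face-boundary words, the probability that all $k$ lie in $R$ is $p^k$; using $p=n^{3(d-1)+o(1)}$ with $d<1/2$ this yields an expected contribution of at most $n^{3k(d-1/2)+o(1)}\to 0$. Non-generic labelings (with repeated boundaries) are treated by a more careful count in which the loss of relator-independence is compensated by the corresponding reduction of labeling freedom along the shared edges. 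Summing over the finitely many $k\le K_0$ concludes that a.a.s.\ no nontrivial reduced spherical diagram exists.

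The main obstacle I expect is the careful execution of the first-moment bound in the presence of repeated relator assignments, since reducedness alone does not forbid two non-adjacent 2-cells of $D$ from carrying the same relator. The cleanest workaround is likely Ollivier's framework of \emph{abstract diagrams} indexed by distinct relator slots (as in Subsection I.3.b of \cite{[O1]}), which at density $d<1/2$ yields the desired $o(1)$ bound uniformly in the combinatorial type of the putative spherical diagram.
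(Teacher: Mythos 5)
Your proposal is correct and follows essentially the route the paper itself takes: the paper gives no written proof of Lemma~\ref{lemma3.2}, instead invoking Lemma~\ref{lemma3.1} together with Ollivier's outline (the end of Section 2 of \cite{[O3]} and Subsection I.3.b of \cite{[O1]}), which is precisely the combination of the isoperimetric inequality for reduced diagrams and the first-moment count over bounded-size spherical diagrams that you describe. The one step you leave to the reference --- the count for spherical diagrams with repeated or mutually inverse relator assignments --- is deferred at exactly the same point and to exactly the same source as in the paper, so nothing is missing relative to the paper's own treatment.
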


Lemma \ref{lemma3.2} has the following corollary. 

\begin{corollary}\label{corollary3.3}
Let $\Gamma=\Gamma(n,p)$ be the triangle random group 
such that 
$p=n^{3(d-1)+o(1)}$
for some $d<{1/2}$, and let $P$ denote its presentation. 
Then a.a.s. the Euler characteristic of $\Gamma$ is given by $\chi(\Gamma):=\chi(C_P)=1-n+t$.
Moreover, $\Gamma$ is torsion free. 
\end{corollary}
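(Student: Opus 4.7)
The plan is to derive both assertions as immediate consequences of Lemma \ref{lemma3.2}, which states that a.a.s.\ the presentation complex $C_P$ is aspherical, i.e.\ a classifying space $K(\Gamma,1)$. I would condition on this event and then assemble standard facts about groups admitting a finite classifying space.

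For the Euler characteristic claim, once $C_P$ is aspherical it serves as a finite classifying space for $\Gamma$, so the Euler characteristic of $\Gamma$ is well defined and equals $\chi(C_P)$. A direct cell count then yields $\chi(C_P)=1-n+t$, since $C_P$ has exactly one $0$-cell, one $1$-cell for each of the $n$ generators, and one $2$-cell for each of the $t$ relations. The only routine verification is that the $t$ elements of $R$ genuinely contribute $t$ distinct $2$-cells, which is automatic in the binomial model $\Gamma(n,p)$ because each cyclically reduced length-$3$ word is sampled at most once.

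For torsion-freeness I would invoke the standard cohomological-dimension argument. If $\Gamma$ contained an element of finite order $k>1$ generating a subgroup $H\cong\mathbb{Z}/k\mathbb{Z}$, then the covering space of $C_P$ corresponding to $H$ would be a two-dimensional $K(H,1)$, and hence $H$ would have cohomological dimension at most $2$. This contradicts the classical fact that every nontrivial finite group has nonzero cohomology in infinitely many degrees (in particular $H^{2m}(\mathbb{Z}/k\mathbb{Z};\mathbb{Z})\cong\mathbb{Z}/k\mathbb{Z}$ for every $m\ge 1$). The main obstacle lies not in this deduction but entirely in Lemma \ref{lemma3.2} itself; once asphericity is granted, the corollary reduces to a cell count together with a textbook cohomological argument.
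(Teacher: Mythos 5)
Your proposal is correct and matches the paper's (implicit) intent: the paper states Corollary \ref{corollary3.3} as an immediate consequence of Lemma \ref{lemma3.2} without writing out a proof, and the standard deduction it relies on is exactly yours --- the aspherical $2$-complex $C_P$ is a finite $K(\Gamma,1)$, so $\chi(\Gamma)=\chi(C_P)=1-n+t$ by a cell count, and torsion-freeness follows because a nontrivial finite subgroup would have finite cohomological dimension, contradicting the periodicity of the cohomology of finite cyclic groups. No gaps; the whole weight indeed rests on Lemma \ref{lemma3.2}, as you note.
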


Lemma \ref{lemma3.1} has also the following less known consequence.

\begin{corollary}\label{corollary3.4}
Let $\Gamma=\Gamma(n,p)$ be the triangle random group 
such that 
$p=n^{3(d-1)+o(1)}$
for some $d<{4/9}$, and let $P=\langle S|R\rangle$ 
denote its presentation. 
Then a.a.s. every generator $s\in S$ is nontrivial in $\Gamma$. 
\end{corollary}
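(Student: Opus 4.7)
The plan is to proceed by contradiction, combining the isoperimetric inequality of Lemma~\ref{lemma3.1} with a mod-$2$ parity identity specific to triangular presentations. I would first suppose some generator $s\in S$ represents the identity in $\Gamma$. By van Kampen's lemma there is then a reduced van Kampen diagram $D$ for $s$, and its boundary word is $s$, so $|\partial D|=1$. Since $d<4/9$ we have $1-2d>1/9$, and I would pick $\eps>0$ small enough that $3(1-2d-\eps)>1/3$. Lemma~\ref{lemma3.1} applied to $D$ then yields a.a.s.
\[
|D|\le\frac{|\partial D|}{3(1-2d-\eps)}<3,
\]
so $|D|\in\{1,2\}$.

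To finish, I would invoke the elementary parity observation that in any van Kampen diagram over a triangular presentation, counting incidences between edges and $2$-cells gives $3|D|=2\cdot(\text{number of interior edges})+(\text{number of boundary incidences})$, whence $|\partial D|\equiv |D|\pmod 2$ (any spurs contribute an even amount to $|\partial D|$ and preserve the congruence). The case $|D|=2$ is then immediately ruled out, since $|\partial D|=1$ is odd. The case $|D|=1$ is also impossible: a single triangular $2$-cell contributes boundary length exactly $3$, with spurs adding only an even amount, so $|\partial D|\ge 3>1$. This exhausts both cases, giving the desired contradiction. Because Lemma~\ref{lemma3.1} is a.a.s.\ valid for \emph{every} reduced van Kampen diagram, the conclusion holds a.a.s.\ uniformly over all $n$ generators.

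The main point---and the reason the threshold $4/9$ appears---is that $d<4/9$ is precisely the inequality needed to force $|D|<3$ in the bound above; once the diagram has at most two cells, the parity argument peculiar to $3$-letter relators eliminates both remaining possibilities, so no additional probabilistic analysis beyond Lemma~\ref{lemma3.1} is required. The only mild subtlety I would need to handle carefully is the presence of spurs in $D$, but since they shift $|\partial D|$ by an even amount they affect neither the parity congruence nor the bound $|\partial D|\ge 3$ in the single-cell case.
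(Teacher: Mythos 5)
Your proof is correct and follows essentially the same route as the paper: Lemma~\ref{lemma3.1} with $d<4/9$ forces $|D|<3$, and then the parity of $|\partial D|$ together with the fact that relators are cyclically reduced words of length three rules out both $|D|=2$ and $|D|=1$. The paper compresses the final step into the remark that a diagram with $|\partial D|=1$ must have an odd number of cells, but the underlying argument is identical to your explicit incidence count.
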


\begin{proof}
Applying Lemma \ref{lemma3.1} to any density $d<{4}/{9}$ (and using sufficiently
small $\epsilon$) we get the inequality $|\partial D|>|D|/3$. 
Now, trivialization of a generator implies existence of a reduced 
van Kampen diagram $D$ with $|\partial D|=1$. 
By the above inequality we get that such a diagram consists of
$|D|<3$ cells. Furthermore, any such diagram $D$ (i.e. with $|\partial D|=1$) 
has odd number of cells.
However, this is impossible because the relations in $R$ have length three and are cyclically reduced.
\end{proof}

In the proof of Theorem \ref{th:02}  
we need the following two simple probabilistic facts.

\begin{lemma}\label{lemma4}
If $p \ge {3}/{n^2}$ then a.a.s. 
$\Gamma(n,p)$ has at least $4n$ relations. 
\end{lemma}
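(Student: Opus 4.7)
The plan is to observe that, in the binomial model, the random cardinality $|R|$ is distributed as $\mathrm{Bin}(N,p)$, where $N$ is the total number of cyclically reduced words of length three over the alphabet $S\cup S^{-1}$, and then to apply a routine concentration argument.

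First I would count $N$ by inclusion--exclusion over the three ``non-reducedness'' conditions $a=b^{-1}$, $b=c^{-1}$, $c=a^{-1}$ for a word $abc$. Each single condition is satisfied by exactly $4n^2$ words; each pairwise intersection fixes two letters and hence contains exactly $2n$ words; and the triple intersection is empty, since together the conditions would force $a=a^{-1}$. This yields
\[
N \;=\; 8n^{3}-12n^{2}+6n,
\]
so in particular $N\ge 7n^{3}$ for all sufficiently large $n$. Consequently, for $p\ge 3/n^{2}$,
\[
\mu \;:=\; \E|R| \;=\; Np \;\ge\; 21n \;-\; o(n),
\]
and, in particular, $\mu\to\infty$.

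The second step is to concentrate $|R|$ around $\mu$. Since $\mathrm{Var}(|R|)=Np(1-p)\le \mu$, Chebyshev's inequality gives
\[
\Pr\bigl(|R|< \tfrac{1}{2}\mu\bigr) \;\le\; \frac{4\,\mathrm{Var}(|R|)}{\mu^{2}} \;\le\; \frac{4}{\mu} \;\longrightarrow\; 0.
\]
Hence a.a.s.\ $|R|\ge \mu/2 \ge 10n\ge 4n$ for $n$ large, which is the desired conclusion.

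I do not expect any real obstacle: this is essentially a one-line binomial concentration estimate once the count of cyclically reduced length-three words is in hand. The only tiny pitfall is to remember that the three forbidden conditions overlap and to run inclusion--exclusion correctly; a Chernoff bound would give much sharper tails, but Chebyshev already comfortably suffices for the stated bound $4n$.
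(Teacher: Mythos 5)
Your proof is correct and takes essentially the same route as the paper: realize $|R|$ as a binomial random variable $\mathrm{Bin}(N,p)$, compute that its mean is of order $n$ and tends to infinity, and conclude by Chebyshev's inequality. The only cosmetic difference is that you count \emph{all} cyclically reduced words exactly ($N=8n^3-12n^2+6n$), giving the comfortable margin $\mu\ge 21n$, whereas the paper restricts attention to relations with three distinct generators; both versions go through.
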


\begin{proof}
It is enough to estimate the number of relations which have three different elements. Let $X$ be the random variable which 
counts  such relations in $\Gamma(n,p)$, 
where $p=3/{n^2}$. 
There are $N=(8+o(1))\binom{n}{3}$ different relations of this type and each of them is chosen 
independently with probability $p$. 
Therefore, $X$ has the binomial distribution 
$B(N,p)$ and so, since $p\to 0$, 
we have 
$$\Var X=Np(1-p)=(1-p) \E X = (1+o(1))\E X,$$
where 
$$\E X=Np=(8+o(1))\binom{n}{3}\frac{3}{n^2}=(4+o(1))n\to\infty, $$
so the assertion follows from Chebyshev's inequality.
\end{proof}

\begin{lemma}\label{lemma5}
If $p \le {\log n}/({25n^2})$ then a.a.s. there exists 
a generator $s$ such that neither $s$ nor $s^{-1}$ belongs 
to any relation in $\Gamma(n,p)=\langle S|R\rangle$.
\end{lemma}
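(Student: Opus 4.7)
The plan is to apply the second moment method. For each $s\in S$, let $X_s$ be the indicator of the event that neither $s$ nor $s^{-1}$ appears in any relation of $R$, and set $X=\sum_{s\in S} X_s$. The desired conclusion then follows from Chebyshev's inequality $\Pr(X=0)\le \Var X/(\E X)^2$, once we establish (i) $\E X\to\infty$ and (ii) $\E[X_sX_{s'}]\le (1+o(1))\E X_s\cdot\E X_{s'}$ for $s\ne s'$.

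For (i), I would first count the number $N_s$ of cyclically reduced words of length three that contain $s$ or $s^{-1}$. A direct inclusion-exclusion over the three reduction conditions $a\ne b^{-1}$, $b\ne c^{-1}$, $c\ne a^{-1}$ shows that the total number of cyclically reduced length-three words over the alphabet $S\cup S^{-1}$ equals $8n^3-12n^2+6n$. Subtracting the analogous count over $(S\setminus\{s\})\cup(S\setminus\{s\})^{-1}$ yields $N_s=24n^2-O(n)$. Since the relations are chosen independently, $\E X_s=(1-p)^{N_s}$, and since decreasing $p$ only increases $\E X$, it suffices to treat the boundary case $p=\log n/(25n^2)$. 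Then $pN_s=(24/25+o(1))\log n$, giving $\E X_s=n^{-24/25+o(1)}$ and hence $\E X\ge n^{1/25+o(1)}\to\infty$.

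For (ii), fix $s\ne s'$ and let $N_{s,s'}$ be the number of cyclically reduced length-three words using at least one letter from $\{s,s^{-1},s',(s')^{-1}\}$. An analogous counting argument shows that the number of such words using both some letter from $\{s,s^{-1}\}$ and some letter from $\{s',(s')^{-1}\}$ is $O(n)$ (there are $O(n)$ ways once at least two positions of the word are forced into a bounded set). Hence inclusion-exclusion gives $N_{s,s'}=2N_s-O(n)$. Since $np=O(\log n/n)=o(1)$, we have $(1-p)^{-O(n)}=1+o(1)$, and therefore
\[
\E[X_sX_{s'}]=(1-p)^{N_{s,s'}}\le (1+o(1))(1-p)^{2N_s}=(1+o(1))(\E X_s)^2.
\]
Combining, $\Var X\le \E X+o(1)\cdot(\E X)^2$, and Chebyshev's inequality delivers $\Pr(X=0)=o(1)$.

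The main obstacle is quantitative rather than conceptual: one has to verify that the constant $25$ in the hypothesis is large enough to make $pN_s$ strictly below $\log n$, so that $\E X\to\infty$; simultaneously one must check that the covariance contribution from the overlap $N_s+N_{s'}-N_{s,s'}=O(n)$ is washed out by the factor $p$, which is exactly what the bound $p=O(\log n/n^2)$ provides. Both requirements are comfortably met, but they explain the particular form of the hypothesis.
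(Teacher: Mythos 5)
Your proposal is correct and follows essentially the same route as the paper: a second moment (Chebyshev) argument applied to the count of generators untouched by any relation, based on the same count of $24n^2-O(n)$ relations meeting a fixed generator. The only difference is that you spell out the covariance estimate $\E[X_sX_{s'}]=(1+o(1))\E X_s\,\E X_{s'}$, which the paper dismisses as ``easy to check.''
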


\begin{proof}
Let $Y$ count the generators $s$ such that
neither $s$ nor $s^{-1}$ belongs to any relation 
in $\Gamma(n,p)$.
For a given generator $s$ there are 
$48\binom{n-1}{ 2} + 24(n-1) + 2 = a n^2$ 
different relations which contain either $s$ or $s^{-1}$, where 
$a=24+o(1)$. Therefore
$$\E Y = n(1-p)^{an^2}\ge n^{1/50}\to\infty.$$
Furthermore, it is easy to check that $\Var Y=(1+o(1))\E Y$,
so the assertion follows from Chebyshev's inequality.
\end{proof}

\begin{proof}[Proof of Theorem~\ref{th:02}]
In view of Corollary \ref{corollary3.3}, it follows from Lemma \ref{lemma4} that for $p\ge3n^{-2}$
a.a.s. the Euler characteristic $\chi(\Gamma(n,p))$ of $\Gamma(n,p)$
is positive. Since any free group 
has non-positive Euler characteristic,
it follows that a.a.s. $\Gamma(n,p)$ is not free.

Now, if $p\le{\log n/(25n^2)}$, Lemma \ref{lemma5} asserts that a.a.s. there is
a generator $s\in S$ such that neither $s$ nor its inverse $s^{-1}$
appears in any relation from $R$. By Corollary \ref{corollary3.4}, a.a.s.\ all generators 
from $S$ are nontrivial in $\Gamma(n,p)$. Thus, a.a.s.\ $\Gamma(n,p)$ 
splits nontrivially as the free product
$\Gamma(n,p)=\langle s\rangle*\langle S\setminus\{ s\}\rangle$.
Consequently, a.a.s.\ $\Gamma(n,p)$ does not have Kazhdan's 
property (T),
which completes the proof.
\end{proof}

\section{Proof of Theorem \ref{th:03}}\label{sec4}

We prove that the random group $\Gamma(n,p)$ has a.a.s. Kazhdan's property (T) using spectral properties of a special 
random graph 
associated with $\Gamma(n,p)$. We begin with recalling few notions from spectral graph theory.

Let $G=(V,E)$ be a multigraph. We denote by $A=A(G)$ the \textit{adjacency matrix} of $G$, that is $A = (a_{vw})_{v,w\in V}$, 
where $a_{vw}$ is the number of edges between $v$ and $w$. 
Next, let $d_G(v)$ denote the degree of vertex $v$ 
in $G$ and let $\bar{d}$ be the average degree of a vertex in $G$. In what follows we use the term graph instead of multigraph 
keeping in mind that we allow graphs to have multiple edges.

The \textit{normalized Laplacian} of graph $G$ is a 
symmetric matrix $\mathcal{L}(G) = (b_{vw})_{v,w\in V}$, where
$$b_{vw} = \left\{ \begin{array}{ll}
                   1, & \text{if } v=w \text{ and } d_G(v) > 0, \\
                   -a_{vw} / \sqrt{d_G(v) d_G(w)}, & \text{if } \{v,w\}\in G, \\
                   0, & \text{otherwise}.
                  \end{array} \right.
$$
Let $0=\lambda_1 \leq \lambda_2 \leq \ldots \leq \lambda_n$ denote the eigenvalues of $\mathcal{L}(G)$. 
It is easy to see that the 
Laplacian is positive semidefinite and so all its eigenvalues 
are non-negative. The eigenvector corresponding to the smallest eigenvalue 
$\lambda_1 = 0$ has entries $(\sqrt{d_G(v)})_{v\in V}$. 
Furthermore, the remaining eigenvalues 
are bounded above by $2$. The value of 
$\lambda_2$ is called the \textit{spectral gap} of $\mathcal{L}(G)$.

If $G$ has no isolated vertices, then taking $D=D(G)$ to be the diagonal degree matrix with entries $d_{vv} = d_G(v)$, 
we can express the normalized Laplacian of $G$ as $$\mathcal{L}(G) = I - D^{-1/2} A D^{-1/2}.$$ 
Thus,  $1-\lambda_i$, $i=1,2,\dots,n$, are the eigenvalues 
of the matrix $I - \mathcal{L}(G) = D^{-1/2}AD^{-1/2}$
with the same eigenvectors as the eigenvectors for $\lambda_i$ 
for $\mathcal{L}(G)$. 
Therefore, to find the spectrum of $\mathcal{L}(G)$ it is 
enough to study the spectrum of $D^{-1/2}AD^{-1/2}$ instead. 
One way to do this is to use the well-known Courant-Fischer principle 
(cf. \cite{C1920}, \cite{F1905}).

\begin{theorem}[Courant-Fischer Formula]
Let $M$ be a $n\times n$ symmetric matrix with eigenvalues $\lambda_1 \leq \ldots \leq \lambda_n$ and the corresponding 
eigenvectors $v_1, \ldots, v_n$. For $1\leq k \leq n-1$ let $R_k$ denote the span of $v_{k+1}, \dots, v_n$, $R_n = \{0\}$. 
Let $R_k^{\bot}$ denote the orthogonal complement of $R_k$. Then
$$\lambda_k = \max_{\substack{\|x\|=1 \\ x\in R_k^{\bot}}} \langle Mx, x\rangle.$$
\end{theorem}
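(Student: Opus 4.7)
The plan is to exploit the fact that $M$, being symmetric, admits by the spectral theorem an orthonormal basis of eigenvectors. I would assume without loss of generality that $v_1,\dots,v_n$ are chosen to form such an orthonormal basis (possible since eigenspaces of a symmetric matrix are mutually orthogonal and one can orthonormalize within any multiple eigenspace). The key preliminary observation is then to identify $R_k^\bot$ explicitly: since $R_k=\mathrm{span}(v_{k+1},\dots,v_n)$ and $\{v_1,\dots,v_n\}$ is orthonormal, we have $R_k^\bot=\mathrm{span}(v_1,\dots,v_k)$.

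Next I would evaluate the Rayleigh quotient on $R_k^\bot$. Any unit vector $x\in R_k^\bot$ can be written uniquely as $x=\sum_{i=1}^{k}c_iv_i$ with $\sum_{i=1}^{k}c_i^2=1$. Applying $M$ term by term and using $Mv_i=\lambda_i v_i$ together with orthonormality gives
$$\langle Mx,x\rangle=\Bigl\langle \sum_{i=1}^{k}c_i\lambda_i v_i,\sum_{j=1}^{k}c_j v_j\Bigr\rangle=\sum_{i=1}^{k}\lambda_i c_i^2.$$
Since $\lambda_1\le\dots\le\lambda_k$, the right-hand side is a convex combination of the $\lambda_i$'s and hence at most $\lambda_k\sum_{i=1}^{k}c_i^2=\lambda_k$, establishing the upper bound.

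Finally, I would verify that the bound is attained: taking $x=v_k$ (which lies in $R_k^\bot$ and has unit norm) yields $\langle Mv_k,v_k\rangle=\lambda_k\langle v_k,v_k\rangle=\lambda_k$. Combining the upper bound with the attainment shows the maximum equals $\lambda_k$, as claimed. The argument is essentially mechanical once the spectral decomposition is in hand; I do not anticipate any real obstacle, the only subtle point being to justify the orthonormal eigenbasis in the case of repeated eigenvalues, which is handled by the standard spectral theorem for real symmetric matrices.
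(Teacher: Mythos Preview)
Your argument is correct and is the standard proof via the spectral theorem for real symmetric matrices. The paper itself does not supply a proof of this statement; it is quoted as the classical Courant--Fischer principle with references to the original works, so there is no paper proof to compare against.
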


For a triangular group presentation $P = \langle S | R \rangle$
we define a graph $L=L_P$, called the \textit{link graph} of 
$P$, in the following way. 
The set of vertices of $L$ consists of all generators from $S$ together with their formal inverses $S^{-1}$. Furthermore, 
every relation $abc$ present in $R$ generates three edges $\{a,b^{-1}\}$, $\{b,c^{-1}\}$ and $\{c,a^{-1}\}$ in $L$.

The key ingredient of our argument is the following result 
of \.{Z}uk who showed that 
if the spectral gap of $\mathcal{L}(L)$ is sufficiently large, then the group has property (T) 
(see Proposition 6 on p. 661 of \cite{Z2003}, where $L'(S)$ coincides with our
$L_P$,
and where $\lambda_1$ denotes the same as our $\lambda_2$).

\begin{theorem}[\.{Z}uk \cite{Z2003}]\label{th:1}
Let $\Gamma$ be a group generated by a finite group presentation $P = \langle S | T \rangle$ and let $L=L_P$. 
If $L$ is connected and $\lambda_2[\mathcal{L}(L)] > 1/2$, then $\Gamma$ has Kazhdan's property (T).
\end{theorem}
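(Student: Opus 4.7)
The plan is to deduce the theorem from \.{Z}uk's criterion (Theorem~\ref{th:1}) applied to the link graph $L=L_{P}$ of the random presentation $P$ of $\Gamma(n,p)=\langle S|R\rangle$. Thus I need to prove that for $p\ge C\log n/n^{2}$ with $C>0$ sufficiently large, a.a.s. $L$ is connected and $\lambda_{2}[\mathcal{L}(L)]>1/2$. Both conclusions will be read off from the single spectral estimate that, writing $A$ for the adjacency matrix of $L$ and $D$ for its diagonal degree matrix, every eigenvalue of $M=D^{-1/2}AD^{-1/2}$ other than its leading one has absolute value $<1/2$; this forces $\mathcal{L}(L)=I-M$ to have a simple zero eigenvalue, ruling out both isolated vertices (which, by the paper's definition of $\mathcal{L}$, would contribute an extra zero eigenvalue) and disconnectedness among the non-isolated part.

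To set up the spectral estimate I would first describe the expected adjacency matrix $\bar{A}=\mathbb{E}A$. A direct count shows that for each pair of distinct vertices $u,v\in S\cup S^{-1}$ the number of relations contributing the edge $\{u,v\}$ to $L$ equals $(12+o(1))n$, uniformly in $u,v$ (with only a lower-order correction when $v=u^{-1}$). Hence $\bar{A}$ differs in spectral norm by $O(np)$ from $(12+o(1))np(J-I)$, where $J$ is the $2n\times 2n$ all-ones matrix; its leading eigenvalue is thus $(24+o(1))C\log n$ with all-ones eigenvector, while the remaining eigenvalues are $O(np)$. In parallel, a Bernstein/Chernoff tail bound combined with a union bound over the $2n$ vertices shows that, for $C$ large, a.a.s. every vertex of $L$ has degree $(1+o(1))\cdot 24n^{2}p$ (and in particular is nonisolated), so that $M$ is, up to a spectrally negligible error, $(24n^{2}p)^{-1}A$.

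The crux of the proof is then the deviation bound
\begin{equation*}
\|A-\bar{A}\|_{\mathrm{op}}=O(\log n)\quad\text{a.a.s.},
\end{equation*}
with implicit constant independent of $C$. I would derive this from the decomposition $A-\bar{A}=\sum_{r}(X_{r}-p)A_{r}$, where $X_{r}\in\{0,1\}$ is the genuinely independent Bernoulli indicator that relation $r$ is selected and $A_{r}$ is the deterministic adjacency matrix of the three edges $r$ contributes, so that $\|A_{r}\|_{\mathrm{op}}\le 3$ and for a generic relation $A_{r}^{2}$ is diagonal with six ones. A short calculation then yields $\|\sum_{r}\mathbb{E}[(X_{r}-p)^{2}A_{r}^{2}]\|_{\mathrm{op}}=O(n^{2}p)=O(\log n)$, and the matrix Bernstein inequality delivers the claimed deviation. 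Dividing by the common degree $(24+o(1))C\log n$, every eigenvalue of $M$ other than the top one becomes $O(1/C)$ in absolute value, hence $<1/2$ once $C$ is large enough, which completes the proof.

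The main technical difficulty I anticipate is precisely this matrix-concentration step, since the three edges of $L$ produced by a single relation are correlated, so off-the-shelf spectral estimates for random graphs with independent edges are not directly applicable. The proposed decomposition sidesteps the issue by carrying out concentration at the level of the relations themselves, which are independent. A secondary care-point is that atypical relations of types~$1$ and $2$ (for which $A_{r}$ is not a $3$-matching) must be handled separately, but their total expected number is $O(n^{2}p)=O(\log n)$ and they therefore contribute at most this order to $\|A-\bar{A}\|_{\mathrm{op}}$, not affecting the final estimate.
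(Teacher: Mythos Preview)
Your proposal does not address the stated theorem. Theorem~\ref{th:1} is \.{Z}uk's spectral criterion itself: it asserts that \emph{any} finitely presented group whose link graph is connected with $\lambda_2[\mathcal{L}(L)]>1/2$ has Kazhdan's property~(T). This is a general representation-theoretic/geometric fact with no random input whatsoever. The paper does not prove it; it simply quotes it from \cite{Z2003} (Proposition~6 there). A proof would require the machinery behind \.{Z}uk's criterion (harmonic cocycles, angles between subspaces in the link, etc.), none of which appears in your proposal.

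What you have written is instead a proof sketch for Theorem~\ref{th:3} (and hence Theorem~\ref{th:03}): that for $p\ge C\log n/n^2$ the link graph of the \emph{random} presentation a.a.s.\ satisfies the hypotheses of Theorem~\ref{th:1}. Your very first sentence makes this explicit: you propose to ``deduce the theorem from \.{Z}uk's criterion (Theorem~\ref{th:1})'', which is circular if the target is Theorem~\ref{th:1} itself.

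As a side remark, if one reinterprets your write-up as an attempt at Theorem~\ref{th:3}, the approach is plausible but genuinely different from the paper's. The paper splits $L$ into three edge-disjoint pieces $L_1,L_2,L_3$, couples each $L_i$ with an Erd\H{o}s--R\'enyi graph $G(2n,\rho)$, invokes Coja-Oghlan's eigenvalue bounds (Theorem~\ref{th:4}) for $G(2n,\rho)$, and then uses a perturbation lemma (Lemma~\ref{lemma:2}) together with a Courant--Fischer argument to recombine the three pieces. You instead propose to control $\|A-\bar A\|_{\mathrm{op}}$ in one shot via matrix Bernstein, summing over the independent relation indicators. Your route is shorter and avoids the decomposition and the black-box appeal to Coja-Oghlan; the paper's route is more hands-on with the random-graph structure and reuses existing spectral results rather than matrix concentration. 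But none of this is relevant to proving Theorem~\ref{th:1}, which is what was asked.
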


Recall that \.{Z}uk studied the model $\Gamma(n,t)$ of a random triangular group in which we keep $t \sim n^{3d+o(1)}$. 
To simplify calculations in estimating the spectral gap of the Laplacian he considered the so called \textit{permutation model} 
of random triangular groups
which is more convenient to work with than the triangular model $\Gamma(n,t)$ because in this model the corresponding 
link graph is regular. \.{Z}uk proved that a.a.s. the Laplacian of the link graph in the permutation model 
has large spectral gap. He also stated that the same holds for the triangular model $\Gamma(n,t)$ for $d > 1/3$. 
However, he did not fully justified this statement. Recently, Kotowski and Kotowski \cite{KK2012} 
showed how to modify \.Zuk's argument for the
permutation model to make it 
work for  the triangular model $\Gamma(n,t)$ as well,
completing the proof of the following theorem.

\begin{theorem}[\.{Z}uk \cite{Z2003}, Kotowski and Kotowski \cite{KK2012}]\label{th:2}
If $d>1/3$, then a.a.s. $\Gamma(n,t)$ has property~(T). 
\end{theorem}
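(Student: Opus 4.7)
The plan is to invoke \.{Z}uk's spectral criterion, Theorem~\ref{th:1}, applied to the link graph $L = L_P$ of the random presentation $P = \langle S | R \rangle$ associated with $\Gamma(n,t)$. Thus the task reduces to proving that for $d > 1/3$, a.a.s.\ $L$ is connected and satisfies $\lambda_2[\mathcal{L}(L)] > 1/2$.

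First I would analyse the degree sequence of $L$. The vertex set $S \cup S^{-1}$ has cardinality $2n$, and inspection of the three edges $\{a,b^{-1}\}$, $\{b,c^{-1}\}$, $\{c,a^{-1}\}$ produced by a relation $abc$ shows that each relation contributes exactly $1$ to the degree of each of the six letters $a, b, c, a^{-1}, b^{-1}, c^{-1}$. Summing over $t = n^{3d+o(1)}$ relations, the average degree is $\bar{d} = 3t/n = n^{3d-1+o(1)}$, which tends to infinity since $d > 1/3$. A Chernoff-type bound applied to the hypergeometric distribution governing how many relations contain a fixed letter shows that every vertex of $L$ has degree $(1+o(1))\bar{d}$ a.a.s., and in particular $L$ is a.a.s.\ connected (a union bound rules out small separated components once $\bar{d} \to \infty$).

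Next I would estimate the spectral gap. With $A$ and $D$ the adjacency and diagonal degree matrices of $L$, the nontrivial eigenvalues of $I - \mathcal{L}(L) = D^{-1/2} A D^{-1/2}$ are, by the almost-regularity above, within $o(1)$ of those of $\bar{d}^{-1} A$. The cleanest strategy is to compare with the \emph{permutation model} of triangle groups, in which the three letters of each relation are read off independent uniformly random permutations of $S$; the corresponding link graph is then exactly $\bar{d}$-regular, and \.{Z}uk \cite{Z2003} showed via the trace method that its second eigenvalue (on the orthogonal complement of the Perron vector) is $o(\bar{d})$, well below $\bar{d}/2$. For the triangular model my plan is to run the trace method directly on $L$, expanding $\mathrm{E}\,\mathrm{tr}(A^{2k})$ as a sum over closed walks of length $2k$, grouping contributions by the underlying multiset of relations, and bounding the number of walks of each combinatorial type. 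The dominant contribution comes from tree-like walks and matches the spectral radius $2\sqrt{\bar{d}-1}$ of the infinite $\bar{d}$-regular tree; all non-tree contributions are of lower order once $\bar{d} \to \infty$. This yields $\lambda_2(A) = o(\bar{d})$ and, combined with the almost-regularity of $D$, gives $\lambda_2[\mathcal{L}(L)] \ge 1 - o(1) > 1/2$.

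The main obstacle is the combinatorial bookkeeping in the trace method for the triangular model. Unlike in the permutation model, where the three edges produced by a single relation decouple because they come from independent random permutations, in the triangular model these edges are correlated: closed walks that re-traverse multiple edges of the same relation produce extra terms with no analogue in the permutation-model computation. Controlling these correlated contributions is precisely the point at which \.{Z}uk's original argument was incomplete, and where Kotowski and Kotowski \cite{KK2012} provide the missing analysis, showing that the correlated terms remain negligible throughout the regime $d > 1/3$. With their bound in hand, Theorem~\ref{th:1} delivers property~(T).
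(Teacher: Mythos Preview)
The paper does not give its own proof of this theorem. Theorem~\ref{th:2} is stated as a cited result, attributed to \.{Z}uk \cite{Z2003} and Kotowski--Kotowski \cite{KK2012}; the paper only provides a short narrative before the statement explaining that \.{Z}uk introduced the permutation model to simplify the spectral estimate, that his passage to the triangular model $\Gamma(n,t)$ was not fully justified, and that Kotowski--Kotowski supplied the missing argument. There is therefore no in-paper proof against which to compare your proposal.

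That said, your sketch is a faithful outline of exactly the strategy the paper attributes to those references: apply \.{Z}uk's spectral criterion (Theorem~\ref{th:1}) to the link graph, establish near-regularity and connectivity, and control the nontrivial spectrum via a trace/moment computation, with the Kotowski--Kotowski contribution being precisely the handling of the correlations among the three edges produced by a single relation. So your approach matches the intended one; it is just that the paper itself defers the details to the cited works rather than reproducing them. If anything, one could note that Kotowski--Kotowski's actual argument does not run the trace method directly on $L$ in the triangular model but rather passes through an intermediate model and a coupling to reduce to a setting where \.{Z}uk-style spectral estimates apply; your description of ``running the trace method directly on $L$'' slightly oversimplifies what they do, though the spirit is right.
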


Since property (T) is a monotone property, 
Theorem \ref{th:2} implies that for any $\epsilon > 0$ 
and $p=\Omega(n^{\epsilon-2})$, a.a.s. the random group $\Gamma(n,p)$ has property (T). Here we give a stronger result, 
namely we show that the Laplacian of the link graph of $\Gamma(n,p)$ has a large spectral gap provided that 
$p \geq C{\log n}/{n^2}$ for a sufficiently large constant $C>0$.

\begin{theorem}\label{th:3}
Let $L$ be the link graph of $\Gamma(n,p)$.
There exists $C>0$ such that if $p \geq C{\log n}/{n^2}$, then a.a.s. 
$\lambda_2[\mathcal{L}(L)] > 1/2$.
\end{theorem}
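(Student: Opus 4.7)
\medskip
\noindent\textit{Proof proposal.}
The plan is to work with the normalised adjacency matrix $M:=D^{-1/2}AD^{-1/2}$; since the eigenvalues of $M$ are $\{1-\lambda_i\}$, the inequality $\lambda_2[\mathcal{L}(L)]>1/2$ is equivalent to the second largest eigenvalue $\mu$ of $M$ satisfying $\mu<1/2$ a.a.s. By the $S_n$-symmetry of the triangular model and the inversion involution, every vertex of $L$ has the same expected degree $\bar d=\Theta(n^2p)=\Theta(\log n)$; the constant $C$ in the hypothesis will be chosen large enough to drive two concentration arguments at scale $\log n$. The first is degree concentration: the degree of a fixed vertex is a sum of $\Theta(n^2)$ independent Bernoulli$(p)$ indicators, so a Chernoff estimate together with a union bound over the $2n$ vertices yields, a.a.s., $\deg(v)=(1+o(1))\bar d$ uniformly in $v$. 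This lets us identify the trivial eigenvector $D^{1/2}\mathbf 1$ of $\mathcal L$ with $\sqrt{\bar d}\,\mathbf 1$ up to negligible error in Rayleigh quotients.

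The heart of the proof is a spectral decomposition $A=\E[A]+(A-\E[A])$. The expected matrix $\E[A]$ commutes with every permutation of $S$ and with $v\mapsto v^{-1}$; hence it lies in a small commutative algebra and one computes $\E[A]=\alpha J+\beta P+O(1)$, where $J$ is the all-ones matrix, $P$ is the inversion permutation matrix, and $\alpha,\beta=\Theta(\bar d/n)$. Consequently $\mathbf 1$ is an eigenvector with eigenvalue $\bar d(1+o(1))$, while every other eigenvalue of $\E[A]$ is $o(1)$. The main technical estimate is then
\[
\|A-\E[A]\|=O(\sqrt{\bar d})\qquad\text{a.a.s.}
\]
Since each selected triple contributes three correlated edges, $A$ is not an Erd\H{o}s--R\'enyi matrix; I would split $A=A_1+A_2+A_3$ according to which of the three edges of each relation is contributed, so that each $A_i$ has essentially independent Bernoulli entries with parameter $O(p)$. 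For each $A_i$ the bound can then be obtained either by the trace method, estimating $\E\operatorname{tr}(A_i-\E A_i)^{2k}$ for $k\sim\log n$, or by a Feige--Ofek/Kahn--Szemer\'edi-type discretisation argument; the triangle inequality then controls $A-\E[A]$ itself.

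With both pieces in hand, Courant--Fischer assembles the conclusion. For any unit vector $x\perp D^{1/2}\mathbf 1$ set $y:=D^{-1/2}x$; by Step~1 we have $\|y\|^2=(1+o(1))/\bar d$ and $y$ is almost orthogonal to $\mathbf 1$, so $\langle\E[A]y,y\rangle=o(1)\|y\|^2$. Hence
\[
\langle Mx,x\rangle=\langle Ay,y\rangle\le\langle\E[A]y,y\rangle+\|A-\E[A]\|\cdot\|y\|^2=\big(o(1)+O(\sqrt{\bar d})\big)\cdot\|y\|^2=O(\bar d^{-1/2})=o(1),
\]
so $\mu=o(1)<1/2$ once $n$ is large, which finishes the argument. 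The main obstacle is clearly the sparse spectral norm bound $\|A-\E[A]\|=O(\sqrt{\bar d})$ at the threshold sparsity $\bar d\asymp\log n$: standard Wigner-semicircle asymptotics are not directly applicable at this sparsity, and one must either trim high-degree vertices in the Feige--Ofek style or push a careful moment calculation through, which is exactly where the largeness of the constant $C$ in the hypothesis of Theorem~\ref{th:3} is used.
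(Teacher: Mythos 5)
Your outline is sound and reaches the right conclusion, but its key technical step takes a genuinely different route from the paper. Both arguments begin identically: the correlation among the three link edges produced by a single relation $abc$ is removed by splitting $L$ (equivalently $A$) into $L_1,L_2,L_3$ according to which of the three edges is taken, so that each piece has independent entries. From there, however, the paper never estimates $\|A_i-\E A_i\|$. It instead shows that a.a.s.\ each $L_i$ differs from a genuine Erd\H{o}s--R\'enyi graph $G(2n,\rho)$ with $\rho=1-(1-p)^{4n-4}$ only by adding one matching and deleting another (this disposes simultaneously of the multiple edges and of the deficient $\{a,a^{-1}\}$ pairs), quotes Coja-Oghlan's theorem giving $\lambda_2[\mathcal L(G(2n,\rho))]\ge 1-c_1(2n\rho)^{-1/2}$ at this density, transfers the gap from $G(2n,\rho)$ to $L_i$ by an elementary perturbation lemma for the normalized Laplacian of an almost-regular graph modified by a bounded-degree graph, and finally assembles the pieces via a Courant--Fischer computation showing $\lambda_{2n-1}[I-\mathcal L(L)]\le\sum_{i=1}^3\lambda_{2n-1}[I-\mathcal L(L_i)]$. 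Your route proves the gap ``from scratch'' via $\|A_i-\E A_i\|=O(\sqrt{\bar d})$, and your final Rayleigh-quotient assembly (including the treatment of $\E[A]=\alpha J+\beta P+O(1)$ and the near-orthogonality of $y$ to $\mathbf 1$) is correct; if carried out it even yields the quantitatively stronger $\lambda_2[\mathcal L(L)]=1-O((\log n)^{-1/2})$. Two cautions on the part you leave as a black box: (i) the entries of $A_i$ are independent binomials $\mathrm{Bin}(m_{uv},p)$ with $m_{uv}\in\{4n-4,\,2n-1\}$, not Bernoulli variables, so before invoking a Feige--Ofek/Kahn--Szemer\'edi bound you must condition on the a.a.s.\ event that no multiplicity exceeds two --- exactly the multigraph bookkeeping the paper's matching argument performs; and (ii) at sparsity $\bar d\asymp\log n$ the plain trace method with $k\sim\log n$ does not deliver $O(\sqrt{\bar d})$, so of the two options you offer only the Kahn--Szemer\'edi-type discretization is viable (though no degree trimming is needed here, since $C$ large forces all degrees to be $\Theta(\log n)$). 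In short, the paper's route buys modularity --- the hard spectral input is outsourced to a quotable theorem about $G(n,p)$ --- at the price of the matching/perturbation bookkeeping, while yours buys a sharper, more self-contained spectral estimate at the price of redoing a nontrivial sparse random matrix bound for a slightly non-standard ensemble.
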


Theorem \ref{th:03} is an immediate consequence of Theorem \ref{th:1} and Theorem~\ref{th:3}. In the remaining part 
of this section we give the proof of Theorem \ref{th:3}.

The main idea of our argument comes from \.Zuk who used 
a similar approach in \cite{Z2003}.
First, we  divide the graph $L$ into three random graphs 
$L_1$, $L_2$ and $L_3$ 
which shall behave in a similar way as the Erd\H{o}s-R\'{e}nyi random graph $G(2n,\rho)$, for some appropriately chosen $\rho$. 
 We partition  $L$ into graphs $L_i$ in the following way. 
The three graphs $L_i$ have the same vertex set as $L$, that is the set $S\cup S^{-1}$ of all generators together 
with their formal inverses. For every relation $abc\in R$ we place the edge $\{a,b^{-1}\}$ in $L_1$, $\{b,c^{-1}\}$ in $L_2$ and 
$\{c,a^{-1}\}$ in $L_3$. Therefore in graphs $L_i$ every edge appears independently from other edges. Note however that 
between any two vertices 
we can have multiple edges, in particular there can be up to $4n-4$ such edges between any pair of vertices $a$ and $b$, where 
$a^{-1}\neq b$. Furthermore, unlike in \.Zuk's original proof, 
our graphs are not regular, which is the main small difficulty in our argument.
However, it turns out that adding a small correction to a graph which has the 
degree sequence concentrated around a particular value does not affect much the size of the spectral gap of the Laplacian.

\begin{lemma}\label{lemma:2}
Let $0 < \epsilon < 1$ and let $G$ be a connected graph on $n$ vertices
 such that for any vertex $v$ in $G$, 
$|d_G(v) - d| \leq \epsilon d$. Let $H$ be a graph on the same vertex set and such that $d_H(v) \leq \epsilon d$
for any vertex $v$ in $H$. Then 
$$\lambda_{n-1}[I-\mathcal{L}(G\cup H)] \leq  \lambda_{n-1}[I-\mathcal{L}(G)] + 
\frac{\epsilon}{1-\epsilon}$$ 
or equivalently 
$$\lambda_2[\mathcal{L}(G\cup H)] \geq \lambda_2[\mathcal{L}(G)] - 
\frac{\epsilon}{1-\epsilon}\,.$$ 
\end{lemma}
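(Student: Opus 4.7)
The plan is to use the Courant--Fischer variational characterization of the second largest eigenvalue of $I - \mathcal{L}$. Since $G$ is connected, the top eigenvalue of $I-\mathcal{L}(G)$ is $1$ with one-dimensional eigenspace spanned by $D_G^{1/2}\mathbf{1}$, so one has the Rayleigh-quotient formula
$$\lambda_{n-1}[I-\mathcal{L}(G)] = \max_{\substack{y \neq 0,\\ \sum_v d_G(v) y_v = 0}} \frac{\langle A_G y, y\rangle}{\langle D_G y, y\rangle},$$
and an analogous formula for $G\cup H$ with $d_G$ replaced by $d_{G\cup H}$. My approach is to transfer a maximizer for $G\cup H$ into a valid test vector for $G$ by a small shift, and then compare Rayleigh quotients.

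First I would take $y$ attaining the maximum for $G\cup H$, so that $\sum_v d_{G\cup H}(v)y_v=0$, and define $y':=y-c\mathbf{1}$ with $c:=(\sum_v d_G(v)y_v)/(\sum_v d_G(v))$, which makes $y'$ a legal test vector for $G$. Writing $\sum_v d_G(v)y_v=-\sum_v d_H(v)y_v$ and applying Cauchy--Schwarz with the bounds $d_H(v)\le\epsilon d$ and $d_G(v)\ge(1-\epsilon)d$, one obtains a crude estimate of the form $c^2\sum_v d_G(v)\le O(\epsilon^2)\,d\|y\|^2$. A direct expansion, using symmetry of $A_G$ and the definition of $c$, then gives $\langle A_Gy',y'\rangle=\langle A_Gy,y\rangle-c^2\sum_v d_G(v)$ and $\langle D_Gy',y'\rangle=\langle D_Gy,y\rangle-c^2\sum_v d_G(v)$. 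Applying the Rayleigh bound for $G$ to $y'$ and rearranging yields $\langle A_Gy,y\rangle\le \mu\langle D_Gy,y\rangle+O(\epsilon^2)\,d\|y\|^2$, where $\mu:=\lambda_{n-1}[I-\mathcal{L}(G)]$.

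Second I would split $\langle A_{G\cup H}y,y\rangle=\langle A_Gy,y\rangle+\langle A_Hy,y\rangle$ and use the standard pointwise estimate $\langle A_Hy,y\rangle\le\sum_v d_H(v)y_v^2\le\epsilon d\|y\|^2$, which follows from $2|y_vy_w|\le y_v^2+y_w^2$. Combining with the previous step and the trivial inequality $\langle D_Gy,y\rangle\le\langle D_{G\cup H}y,y\rangle$, the additive $\epsilon d\|y\|^2$ (and the lower-order $O(\epsilon^2)$ shift term) is then absorbed into a multiple of $\langle D_{G\cup H}y,y\rangle$ via $\|y\|^2\le\langle D_{G\cup H}y,y\rangle/((1-\epsilon)d)$, producing the factor $\epsilon/(1-\epsilon)$ demanded by the statement.

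The main obstacle is the bookkeeping forced by the two distinct orthogonality constraints ($\perp D_G\mathbf{1}$ versus $\perp D_{G\cup H}\mathbf{1}$) appearing in the Rayleigh characterizations for $G$ and $G\cup H$: the shift $c$ is not automatically zero and introduces error terms that must be pinned down quantitatively and shown to be $O(\epsilon^2)$, hence negligible against the main $\epsilon/(1-\epsilon)$ contribution from $H$. Everything else is an elementary chain of estimates based on the two inputs $d_H(v)\le\epsilon d$ and $(1-\epsilon)d\le d_G(v)\le(1+\epsilon)d$.
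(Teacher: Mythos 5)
Your overall strategy --- recentering the extremal vector for $G\cup H$ by a multiple of $\mathbf{1}$ so that it becomes admissible in the Rayleigh characterization for $G$ --- is a genuinely different route from the paper's, which instead performs the diagonal change of variables $y=D_G^{1/2}D^{-1/2}x$ and bounds the $A_H$-contribution by $\|A_H\|\,\|D^{-1/2}\|^2\le \epsilon d\cdot\bigl(d(1-\epsilon)\bigr)^{-1}=\epsilon/(1-\epsilon)$, with no recentering at all. Your two identities $\langle A_Gy',y'\rangle=\langle A_Gy,y\rangle-c^2\sum_v d_G(v)$ and $\langle D_Gy',y'\rangle=\langle D_Gy,y\rangle-c^2\sum_v d_G(v)$ are correct, as is the bound $\langle A_Hy,y\rangle\le\sum_v d_H(v)y_v^2\le\epsilon d\|y\|^2$.

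The gap is in the final ``absorption'' step. Write $\mu=\lambda_{n-1}[I-\mathcal{L}(G)]$, $P=\langle D_Gy,y\rangle$, $Q=\sum_v d_H(v)y_v^2$ and $E=(1-\mu)c^2\sum_v d_G(v)$. Your chain of inequalities yields
$$\lambda_{n-1}[I-\mathcal{L}(G\cup H)]\le\frac{\mu P+Q+E}{P+Q}=\mu+\frac{(1-\mu)Q+E}{P+Q},$$
and already $Q/(P+Q)\le Q/P\le\epsilon/(1-\epsilon)$ consumes the entire error budget of the statement. The recentering cost $E/(P+Q)$ is a genuine surplus on top of this: your Cauchy--Schwarz estimate bounds it only by a quantity of order $\epsilon^2/(1-\epsilon)^3$, which is nonzero and cannot be hidden inside $\epsilon/(1-\epsilon)$. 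In addition, the prefactor $(1-\mu)$ multiplying $Q$ is at most $1$ only when $\mu\ge0$; since $\mu$ can be negative (e.g.\ for a complete graph), in general this term alone can reach $2\epsilon/(1-\epsilon)$, and the step $\mu\langle D_Gy,y\rangle\le\mu\langle D_{G\cup H}y,y\rangle$ also fails for $\mu<0$. So as written your argument proves the lemma with an error term $\epsilon/(1-\epsilon)+O(\epsilon^2)$ (worse still if $\mu<0$) rather than the stated $\epsilon/(1-\epsilon)$. This weaker form would still suffice for the paper's application (there $\epsilon=0.01$ and one only needs $\lambda_2>1-3\epsilon$), but to obtain the lemma as stated you must either track and genuinely eliminate the recentering cost, or sidestep it as the paper does via the substitution $y=D_G^{1/2}D^{-1/2}x$.
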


\begin{proof}
Let $A_G=A(G)$, $D_G=D(G)$, $A_H=A(H)$ and $D_H=D(H)$.
All entries in $A_H$ are nonnegative and thus the spectral norm of $A_H$ can be bounded 
from above by the maximum sum of entries in a row, which is equal to the maximum degree of $H$. Therefore we infer that 
$$\|A_H\| \leq \epsilon d.$$

Notice that $A = A_G + A_H$ is the adjacency matrix of the graph $G\cup H$ and $D = D_G + D_H$ is its degree matrix. 
Since $D$ is a diagonal matrix such that $d(1-\epsilon) \leq d_{ii} \leq d(1 + 2\epsilon)$,
$$\|D^{-1/2}\| \leq (d(1 -\epsilon))^{-1/2}.$$ 

Moreover, $D_G^{1/2}D^{-1/2}$ is also a diagonal matrix with all entries nonnegative and bounded above by 1. Thus,
$$\|D_G^{1/2}D^{-1/2}\| \leq 1.$$

Let $X$ be the eigenvector of $D^{-1/2}AD^{-1/2}$ corresponding to the largest eigenvalue 
$\lambda_n[D^{-1/2}AD^{-1/2}] = 1$ and $Y$
be the eigenvector of $D_G^{-1/2}A_G D_G^{-1/2}$ corresponding to the largest eigenvalue 
$\lambda_n[D_G^{-1/2}A_G D_G^{-1/2}]=1$. Then 
$$X_i = \sqrt{d_G(i) + d_H(i)} \text{ and } Y_i = \sqrt{d_G(i)}.$$

Furthermore, notice that since $D_G^{1/2}D^{-1/2} X = Y$, if $x\bot X$ and $y=D_G^{1/2}D^{-1/2}x$, then $y \bot Y$.

We can now estimate $\lambda_{n-1}[I-\mathcal{L}(G\cup H)]$ using Courant-Fischer formula, 
Cauchy-Schwarz inequality, and the fact, that for 
diagonal matrix $M$ we have $\langle Mx, y \rangle = \langle x, My\rangle$ for any vectors $x$ and $y$:
\begin{align*}
 \lambda_{n-1}&[I-\mathcal{L}(G\cup H)]  =  \lambda_{n-1}[D^{-1/2}A D^{-1/2}] 
\\ 
 & =  \max_{x\bot X, \|x\|=1} \langle D^{-1/2}(A_G + A_H) D^{-1/2} x, x\rangle 
\\ 
 & =  \max_{x\bot X, \|x\|=1} \langle A_G D^{-1/2} x, D^{-1/2} x\rangle + \langle A_H D^{-1/2} x, D^{-1/2} x\rangle\nonumber \\ 
 & \leq  \max_{\substack{x\bot X, \|x\|=1\\ x = D^{1/2}D_G^{-1/2}y}} \langle A_G D_G^{-1/2} y, D_G^{-1/2}  y\rangle +  \|A_H\| \|D^{-1/2}\|^2 \|x\|^2&\nonumber 
	\\ 
 & \leq  \max_{y \bot Y, \|y\| \leq \| D_G^{1/2} D^{-1/2}\|} \frac{\langle D_G^{-1/2} A_G D_G^{-1/2} y, y\rangle}
 {\langle y,y\rangle}\|y\|^2 + \frac{\epsilon}{1-\epsilon} 
\\
 & \leq  \max_{y \bot Y}  \frac{\langle D_G^{-1/2} A_G D_G^{-1/2} y, y\rangle}{\langle y,y\rangle} 
 + \frac{\epsilon}{1-\epsilon} \nonumber \\
 & =  \lambda_{n-1}[D_G^{-1/2}A_G D_G^{-1/2}] + \frac{\epsilon}{1-\epsilon}
\\
 & =  \lambda_{n-1}[I - \mathcal{L}(G)] + \frac{\epsilon}{1-\epsilon} .
\end{align*}

\end{proof}

We also need the fact that in dense random graphs the degree distribution is almost surely concentrated around the average degree.
It is stated in the following well known lemma (which is
a straighforward consequence of Chernoff's inequality).

\begin{lemma}\label{lemma:1}
For every $\epsilon > 0$, there exists a constant $C_{\epsilon}>0$ such that if 
$\rho > C_{\epsilon}{\log m}/{m}$ then a.a.s. for any vertex $v$ 
in $G(m,\rho)$, $$\quad |d(v) - m\rho| < \epsilon m\rho.\quad \qed$$
\end{lemma}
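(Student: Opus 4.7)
The plan is to apply the standard multiplicative Chernoff bound pointwise for each vertex and then union-bound over all $m$ vertices. First I would fix a vertex $v$ and observe that its degree in $G(m,\rho)$ is distributed as $\mathrm{Bin}(m-1,\rho)$, with mean $\mu := (m-1)\rho$. Since $|m\rho - \mu| = \rho = o(m\rho)$ under the assumed regime, the desired concentration around $m\rho$ follows from the corresponding concentration around $\mu$ with a slightly smaller relative deviation (say $\epsilon/2$ instead of $\epsilon$), absorbing the discrepancy for all sufficiently large $m$.

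Next, by the multiplicative Chernoff inequality, for any $\delta \in (0,1)$,
$$\Pr\bigl[|d(v) - \mu| > \delta \mu\bigr] \leq 2\exp(-\delta^2 \mu/3).$$
Applying this with $\delta = \epsilon/2$ and using $\mu \geq (1-o(1))m\rho$, I get that for each fixed vertex $v$,
$$\Pr\bigl[|d(v) - m\rho| \geq \epsilon m\rho\bigr] \leq 2\exp(-c_\epsilon\, m\rho),$$
where $c_\epsilon > 0$ depends only on $\epsilon$ (concretely, one may take $c_\epsilon = \epsilon^2/13$ for all $m$ large enough).

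Finally, a union bound over the $m$ vertices yields
$$\Pr\bigl[\exists\, v : |d(v) - m\rho| \geq \epsilon m\rho\bigr] \leq 2m\exp(-c_\epsilon\, m\rho).$$
Under the hypothesis $\rho \geq C_\epsilon \log m / m$, the right-hand side is at most $2 m^{\,1 - c_\epsilon C_\epsilon}$, which tends to $0$ as $m \to \infty$ as soon as $C_\epsilon$ is chosen large enough to satisfy $c_\epsilon C_\epsilon > 1$ (e.g. $C_\epsilon := 2/c_\epsilon$). There is no substantive obstacle here: the lemma is the routine concentration-plus-union-bound statement for vertex degrees in dense Erd\H{o}s--R\'enyi graphs, and the only minor care needed is to distinguish $m\rho$ from the true binomial mean $(m-1)\rho$, which is swallowed by the slack between $\epsilon$ and $\epsilon/2$.
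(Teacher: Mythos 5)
Your proof is correct and is precisely the standard Chernoff-plus-union-bound argument that the paper itself invokes: the paper gives no written proof, stating only that the lemma is ``a straightforward consequence of Chernoff's inequality,'' and your write-up (including the minor care with the mean $(m-1)\rho$ versus $m\rho$) fills in exactly that intended argument.
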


To estimate the spectral gap of $L_i$ 
we use the result by Coja-Oghlan who in 
\cite{C-O2007} gave precise bounds on the eigenvalues of 
the Laplacian of $G(m,\rho)$. 
In particular, he proved the following theorem.

\begin{theorem}\label{th:4}
Let $\mathcal{L}=\mathcal{L}(G(m,\rho))$. There exist constants $c_0, c_1 > 0$ such that if $\rho \geq c_0 {\log m}/{m}$, then a.a.s. 
we have 
$$0 = \lambda_1[\mathcal{L}] < 1-c_1(m\rho)^{-1/2} \leq \lambda_2[\mathcal{L}] \leq \ldots \leq
\lambda_n[\mathcal{L}] \leq 1+ c_1(m\rho)^{-1/2}.$$
\end{theorem}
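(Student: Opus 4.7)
The plan is to work with the normalized adjacency matrix $M := D^{-1/2}AD^{-1/2}$ instead of $\mathcal{L}$ directly, since $\mathcal{L}$ and $I-M$ coincide, so the eigenvalues of $M$ are $1-\lambda_i[\mathcal{L}]$. The Perron vector of $M$ is $(\sqrt{d_v})_v$ with eigenvalue $1$; the statement of the theorem is equivalent to showing that all other eigenvalues of $M$ have absolute value at most $c_1(m\rho)^{-1/2}$ a.a.s. In other words, I need a uniform bound on $\|M-\Pi\|$, where $\Pi$ is the orthogonal projection (with respect to the standard inner product) onto the Perron eigenspace.

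The two ingredients I would combine are:
\begin{enumerate}
\item \textbf{Degree concentration.} By Lemma \ref{lemma:1}, once $\rho\geq c_0\log m/m$ with $c_0$ large enough, every vertex $v$ of $G(m,\rho)$ satisfies $|d(v)-m\rho|\leq \epsilon\, m\rho$ a.a.s., with $\epsilon=\epsilon(c_0)$ as small as we like. Consequently $D^{-1/2}=(m\rho)^{-1/2}(I+E)$ with $\|E\|\leq 2\epsilon$ on the spectral norm.
\item \textbf{Spectral norm of the centered adjacency matrix.} In the regime $\rho\geq c_0\log m/m$ one has
$$\bigl\| A-\mathbb{E}A \bigr\| \;\leq\; c'\sqrt{m\rho}\qquad \text{a.a.s.,}$$
where $\mathbb{E}A=\rho(J-I)$ (with $J$ the all-ones matrix). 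This can be proved either by the trace method (estimating $\mathbb{E}\,\mathrm{tr}(A-\mathbb{E}A)^{2k}$ for $k$ of order $\log m$), or, more transparently in this dense regime, by the matrix Bernstein inequality applied to the sum of independent rank-two random edge matrices.
\end{enumerate}

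With these in hand, I would split
$$M \;=\; D^{-1/2}\rho(J-I)D^{-1/2} \;+\; D^{-1/2}\bigl(A-\rho(J-I)\bigr)D^{-1/2}.$$
Ingredient (2) together with $\|D^{-1/2}\|\leq \bigl((1-\epsilon)m\rho\bigr)^{-1/2}$ from (1) bounds the second summand in operator norm by $\frac{c'}{(1-\epsilon)}(m\rho)^{-1/2}$. The first summand has rank at most $2$: it equals $\rho D^{-1/2}JD^{-1/2}-\rho D^{-1}$, where $D^{-1}$ has norm $O(1/(m\rho))$, negligible, and $\rho D^{-1/2}JD^{-1/2}=\rho \,ww^{T}$ with $w_v=1/\sqrt{d_v}$ is rank one. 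Since $w$ and the Perron vector $\sqrt{d}$ are both very close to constant multiples of the all-ones vector $\mathbf{1}/\sqrt{m}$ (up to $O(\epsilon)$ perturbations, by (1)), the rank-one piece accounts exactly for the eigenvalue $1$ of $M$ on its Perron eigenvector, and its restriction to the orthogonal complement of $\sqrt{d}$ has norm $O(\epsilon/m)$, easily absorbed into the main error. Projecting onto $\sqrt{d}^{\perp}$ and applying Weyl's inequality then yields $|\lambda_i(M)|\leq c_1(m\rho)^{-1/2}$ for every $i$ corresponding to a non-Perron eigenvalue, which is exactly the claim.

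The main obstacle is ingredient (2), the sharp operator norm bound on $A-\mathbb{E}A$. The difficulty here is that without the lower bound $\rho\geq c_0\log m/m$ one cannot avoid atypically high-degree vertices producing outlier eigenvalues, forcing truncation arguments of Feige--Ofek type. It is precisely the assumed logarithmic density that makes the trace method (or matrix Bernstein) go through cleanly: all moments behave as in the sub-Gaussian model and the semicircle-type bound $O(\sqrt{m\rho})$ is tight. Everything else — degree concentration and the rank-one perturbation calculus — is routine once this estimate is in place.
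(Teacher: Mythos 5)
First, a point of comparison: the paper does not prove Theorem~\ref{th:4} at all --- it is imported verbatim from Coja-Oghlan \cite{C-O2007} --- so your proposal is an attempt to reprove an external black box. Your skeleton (degree concentration, an operator-norm bound on $A-\mathbb{E}A$, and a rank-one perturbation argument around the Perron vector) is indeed the standard route, and ingredient (2) is a true theorem (it is essentially Feige--Ofek). The gaps are in how you propose to establish the two quantitative inputs, and they bite precisely in the regime $\rho\asymp\log m/m$ that the theorem is about. For ingredient (2), matrix Bernstein gives only $\|A-\mathbb{E}A\|=O(\sqrt{m\rho\log m})$; at $\rho\asymp \log m/m$ the lost $\sqrt{\log m}$ is of the same order as $\sqrt{m\rho}$ itself, so you obtain $\|D^{-1/2}(A-\mathbb{E}A)D^{-1/2}\|=O(\sqrt{\log m/(m\rho)})=O(c_0^{-1/2})$, a constant rather than $O((m\rho)^{-1/2})$. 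The plain trace method with $k\asymp\log m$ also does not ``go through cleanly'' here: the F\"uredi--Koml\'os/Vu moment computation needs $m\rho$ polylogarithmically large, and the proofs that do work at density $\Theta(\log m/m)$ (Feige--Ofek, and Coja-Oghlan's own) use the Kahn--Szemer\'edi net argument with a light/heavy-couple split --- exactly the truncation-type machinery you assert the logarithmic density lets you avoid.

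The second gap is the rank-one residual. Writing $d_v=(1+\delta_v)m\rho$, the restriction of $\rho\, ww^{T}$ to $(\sqrt{d}\,)^{\perp}$ has norm $\rho\,\|P_{(\sqrt d)^{\perp}}w\|^2\asymp \frac1m\sum_v(\delta_v-\bar\delta)^2$. Using only $|\delta_v|\le\epsilon$ from Lemma~\ref{lemma:1} (with $\epsilon$ a fixed constant) this is $O(\epsilon^2)$ --- a constant, not your claimed $O(\epsilon/m)$ and not $O((m\rho)^{-1/2})$. To beat $(m\rho)^{-1/2}$ you need $\sum_v(d_v-\bar d)^2=O(m^2\rho)$, i.e.\ concentration of the empirical second moment of the degree sequence, which is true a.a.s.\ but requires a separate argument beyond the worst-case degree bound. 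On the positive side, the two crude bounds you do legitimately have ($O(c_0^{-1/2})$ from Bernstein and $O(\epsilon^2)$ from degree concentration, both small for $c_0$ large) already yield $\lambda_2[\mathcal{L}]\ge 1-O(c_0^{-1/2})>1/2$, which is all that the proof of Theorem~\ref{th:3} actually uses; so your argument proves a weaker statement sufficient for the paper's purposes, but not Theorem~\ref{th:4} as stated.
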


\begin{proof}[Proof of Theorem \ref{th:3}]
It is enough to show that Theorem~\ref{th:3} 
holds for some $p = C\log n/n^2$, 
where $C > 0$ is a sufficiently large constant.

Note that each graph $L_i$ can be generated in the following 
way. Take an auxiliary  multigraph $\mathcal K$ on $2n$ vertices 
with vertices labeled by the generators and their inverses. 
Two vertices  $a$, $b$, 
where $a\neq b^{-1}$,  are joined by $4(n-1)$ edges in $\mathcal K$, 
while vertices $a$, $a^{-1}$ are joined by $2n-1$ edges. 
This is because, there are $2(n-1)$ irreducible words which start
with $ab^{-1}$, 
and $2(n-1)$ irreducible words which start with $ba^{-1}$; 
on the other hand, there are $2n-1$ irreducible words which 
start with $aa$. Then $L_i$ is obtained from $\mathcal K$ by leaving 
each of its edges independently with probability $p$.
First we shall show that the spectral gap of $L_i$ does not differ
significantly from the spectral gap of the random graph 
$G(2n,\rho)$, in which each two vertices are 
joined by an edge  independently 
with probability $\rho = 1-(1-p)^{4n-4}$.

To this end let $\hat{\mathcal K}$ be obtained 
from $\mathcal K$ by adding 
some `supplementary edges' between vertices $a$ and $a^{-1}$, 
so that each pair of vertices of $\hat{\mathcal K}$
is connected by exactly $4n-4$ edges and let $\hat L_i$ 
be the random (multi)graph obtained from $\hat{\mathcal K}$ by selecting its
edges with probability~$p$. We show that a.a.s. $\hat L_i$ 
contains no  edges with multiplicity larger than two,  
all double  edges of $L_i$ form a matching, and furthermore 
each pair $a, a^{-1}$ is connected by at most one edge. 
 Indeed, the probability 
that some edge has multiplicity three is bounded above by 
$$(2n)^2 (4n)^3  p^3\le O(\log^3 n/n)\to 0\,,$$
while the probability that two double edges share a vertex can be estimated from above by 
$$(2n)(2n)^2 (4n)^4 p^4 \le O(\log^4 n/n)\to 0\,.$$
Finally, the probability that there is a multiple edge connecting 
$a$ and $a^{-1}$ for some generator $a$ is bounded from above by 
$$n (4n)^2  p^2 \le O(\log^2 n/n)\to 0\,.$$

Thus, $L_i$ can be viewed as obtained from $L'_i$ which is a copy of 
$G(2n,\rho)$,  $\rho = 1-(1-p)^{4n-4} \ge C \log n/n$, 
by adding to it  some matching $M_i$ (which takes care of
multiple edges) and subtracting another matching $\bar M_i$ 
(which consists of edges generated from supplementary edges of 
$\hat{\mathcal K}$). 
Since $C$ is large, in particular $C \geq c_0$, we infer from  
Theorem \ref{th:4} that 
a.a.s. all eigenvalues of the Laplacian of $L_i'$ but the smallest one are concentrated around 1. 
In particular, a.a.s.
$$\lambda_2[\mathcal{L}(L_i')]> 1-\epsilon$$
for, say, $\eps=0.01$.
Since $C$ is large from  Lemma \ref{lemma:1} 
a.a.s. for any vertex $v\in L_i'$ we have  
$$|d_{L_i'}(v) - 2n\rho | < 2\epsilon n\rho$$
which means that each $L_i'$ is almost regular.
Since a.a.s. $L_i$ can be obtained from $L_i'$ by adding 
a matching and subtracting another one,  using Lemma \ref{lemma:2} we infer that a.a.s.
$$\lambda_2[\mathcal{L}(L_i)] \geq \lambda_2[\mathcal{L}(L_i')] - \frac{\epsilon}{1-\epsilon}
> 1 - 3\epsilon,$$ 
or equivalently $$\lambda_{2n-1}[I-\mathcal{L}(L_i)] < 3\epsilon.$$ Moreover, a.a.s.
$|d_{L_i}(v) - 2C\log n| < 2\epsilon\cdot 2C\log n$. Therefore graphs $L_i$ are also almost regular and the Laplacian of each 
$L_i$ has a large spectral gap.

Thus, it is enough to show that the sum of three graphs with Laplacians having large spectral gaps is also a graph which 
has Laplacian with a large spectral gap.
Let $A_i$ be the adjacency matrix of the graph $L_i$ and $D_i$ be the corresponding diagonal degree matrix of $L_i$. Then 
$A=A_1+A_2+A_3$ is the adjacency matrix of $L$ and 
$D = D_1 + D_2 + D_3$ is its degree matrix. 

It is also easy to see that 
$$\|D_i^{1/2} D^{-1/2}\| \leq 1.$$

Let $X$ be the eigenvector of $D^{-1/2}AD^{-1/2}$ corresponding to the largest 
eigenvalue $\lambda_{2n}[D^{-1/2}AD^{-1/2}] = 1$ and similarly, for $i=1,2,3$, let $X_i$ be the eigenvector of 
$D_i^{-1/2} A_i D_i^{-1/2}$ corresponding to the largest eigenvalue $\lambda_{2n}[D_i^{-1/2} A_i D_i^{-1/2}] = 1$. 
The entries of vectors $X$ and $X_i$ are square roots of vertex degrees in corresponding graphs and 
since $D_i^{1/2}D^{-1/2}X = X_i$ it follows that if $x\bot X$ and $y = D_i^{1/2}D^{-1/2}x$, then $y\bot X_i$.

We can now estimate $\lambda_{2n-1}[I-\mathcal{L}(L)]$ as follows.

\begin{eqnarray}
\lambda_{2n-1}[I-\mathcal{L}(L)] & = & \lambda_{2n-1}[D^{-1/2}AD^{-1/2}]   \nonumber \\
 & = & \max_{x\bot X, \|x\|=1} \langle D^{-1/2}AD^{-1/2} x, x\rangle \nonumber \\
 & = & \max_{x\bot X, \|x\|=1} \sum_{i=1}^3 \langle A_i D^{-1/2} x, D^{-1/2}x \rangle \nonumber \\
 & \leq & \sum_{i=1}^3 \max_{\substack{x\bot X, \|x\|=1\\x = D^{1/2}D_i^{-1/2}y}} \langle A_i D_i^{-1/2} y, D_i^{-1/2} y \rangle \nonumber \\
 & \leq & \sum_{i=1}^3 \max_{y \bot X_i, \|y\| \leq \|D_i^{1/2} D^{-1/2}\|} 
 \frac{\langle A_i D_i^{-1/2} y, D_i^{-1/2} y \rangle}{\langle y,y\rangle} \|y\|^2 \nonumber \\
 & \leq & \sum_{i=1}^3 \max_{\substack{y\bot X_i}} \frac{\langle D_i^{-1/2} A_i D_i^{-1/2} y, y \rangle}{\langle y,y\rangle} 
  \nonumber \\
 & \leq & \sum_{i=1}^3 \lambda_{2n-1}[D_i^{-1/2} A_i D_i^{-1/2}] \nonumber \\
 & = & \sum_{i=1}^3 \lambda_{2n-1}[I-\mathcal{L}(L_i)] \nonumber \\
 & \leq & 9\epsilon.  \nonumber
\end{eqnarray}

Hence, a.a.s. $\lambda_2[\mathcal{L}(L)] = 1 - \lambda_{2n-1}[I-\mathcal{L}(L)] \geq 1 - 9\epsilon$ 
which can be arbitrarily close to 1. In particular, a.a.s. $\lambda_2[\mathcal{L}(L)] > 1/2$.
\end{proof}

\bibliographystyle{plain}

\end{document}